\newtheorem{theorem}{Theorem}
\newtheorem{corollary}{Corollary}
\newtheorem{lemma}{Lemma}
\newtheorem{definition}{Definition}
\newtheorem{conjecture}{Conjecture}
\newtheorem{proposition}{Proposition}
\begin{document}

\title{Restricted $k$-color partitions}
\author{William J. Keith}
\keywords{colored partitions; overpartitions}
\subjclass[2010]{05A17, 11P83}
\maketitle

\begin{abstract}

We generalize overpartitions to $(k,j)$-colored partitions: $k$-colored partitions in which each part size may have at most $j$ colors.  We find numerous congruences and other symmetries.  We use a wide array of tools to prove our theorems: generating function dissections, modular forms, bijections, and other combinatorial maps.  We find connections to divisor sums, the Han/Nekrasov-Okounkov hook length formula and a possible approach to a finitization, and other topics, suggesting that a rich mine of results is available.

\end{abstract}

\section{Introduction}

Major MacMahon is generally credited with having defined overpartitions: partitions in which the last instance of a given part may be overlined or not.  The overpartitions of 3 are \[ 3, \overline{3}, 2+1, \overline{2}+1, 2+\overline{1}, \overline{2}+ \overline{1}, 1+1+1, 1+1+\overline{1} .\]

The generating function for $\overline{p}(n)$, the number of overpartitions of $n$, is \[ \overline{P}(q) := \sum_{n=0}^\infty \overline{p}(n) q^n = \prod_{k=1}^\infty \frac{1+q^k}{1-q^k} = \prod_{k=1}^\infty \frac{1-q^{2k}}{(1-q^k)^2}. \]

In 2004 \cite{CoLo}, Corteel and Lovejoy revisited overpartitions, showing that they are a simple and useful object that naturally arise from the theory of hypergeometric series, and spurring much recent work.  Numerous authors since have found, as Corteel and Lovejoy expected, rich structure in overpartitions, such as congruences and modular form identities: see (\cite{CHSZ}, \cite{Kim}, \cite{XiaYao}).  Overpartitions can also be restricted, like normal partitions, as to the sizes and frequencies of parts appearing, or which parts can be overlined; singular overpartitions are one such variant recently studied by Andrews, Hirschhorn and Sellers (\cite{GEA2}, \cite{HirschSell2}).

A separate object, $k$-colored partitions (also known as multipartitions), are those in which parts may appear in $k$ different types, with the order of colors not mattering (i.e., by convention we may list colors within a part in weakly descending order).  For instance, denoting colors by subscripts, the 2-colored partitions of 3 are \[ 3_2, 3_1, 2_2+1_2, 2_2 + 1_1, 2_1+1_2, 2_1+1_1, 1_2+1_2+1_2, 1_2+1_2+1_1,1_2+1_1+1_1, 1_1+1_1+1_1 .\]

The generating function of $c_k(n)$, the number of $k$-colored partitions of $n$, is \[ C_k(q) := \sum_{n=0}^\infty c_k(n) q^n = \prod_{n=1}^\infty \frac{1}{(1-q^n)^k}. \]

Much work has been done on $k$-colored partitions as well, including many congruences and their properties as modular forms for various $k$.

In the language of colored partitions, overpartitions would be 2-colored partitions in which only a single color may appear for a given size of part.  The motivation of this paper is to consider the natural generalization of $k$-colored partitions in which at most $j$ colors can appear for a given part size.  We show that these objects, which we call $(k,j)$-\emph{colored partitions}, possess their own rich body of symmetries, and are susceptible to both combinatorial and algebraic analysis.  We find numerous congruences, connections to divisor sums in the work of Dilcher and Andrews (\cite{Dilcher}, \cite{GEA1}), and special cases of interest.  There is certainly more to be mined here.

\section{Definitions}

We introduce the formal definitions.

When for two power series $f(q) = \sum_{n=0}^\infty a(n) q^n$ and $g(q) = \sum_{n=0}^\infty b(n) q^n$ we write $f(q) \equiv_m g(q)$, we mean $a(n) \equiv b(n) \pmod{m}$ for all $n$.

We will use the standard notation \[ (a;q)_n = (1-a)(1-aq)\dots(1-aq^{n-1}) , \quad (a;q)_\infty = \lim_{n\rightarrow \infty} (a;q)_n, \quad (q)_\infty := (q;q)_\infty .\]

A weakly decreasing sequence of positive integers $\lambda = (\lambda_1, \dots, \lambda_j)$ partitions $n$, denoted $\lambda \vdash n$, if $\sum_{i=1}^j \lambda_i = n$.  It will be convenient for us on occasion to write partitions using the frequency notation $\lambda = 1^{e_1}2^{e_2}3^{e_3}\dots$, meaning that there are $e_1$ ones in $\lambda$, $e_2$ twos, et cetera.

The Ferrers diagram of a partition $(\lambda_1, \dots , \lambda_j)$ is a stack of unit-size squares justified to the origin in the first quadrant, having $\lambda_i$ squares in the $i$-th column.  Thus for instance the Ferrers diagram of $(4,4,2,1,1)$ is

$$\young(\hfil\hfil:::,\hfil\hfil:::,\hfil\hfil\hfil::,\hfil\hfil\hfil\hfil\hfil)$$

The \emph{hook length} $h_{ij}$ of the square with upper right corner at $(i,j)$ in the plane is the sum of the number of squares directly above and directly to the right of the square, plus the 1 for the square itself.  The hook lengths are marked in the diagram below.

$$\young(21:::,32:::,541::,87421)$$

The \emph{conjugate} of a partition $\lambda$ is the partition with Ferrers diagram that of $\lambda$ reflected across the diagonal.  The conjugate of $(4,4,2,1,1)$ is $5,3,2,2)$.

$$\young(\hfil:::,\hfil:::,\hfil\hfil::,\hfil\hfil\hfil\hfil,\hfil\hfil\hfil\hfil)$$

A partition fixed under conjugation is \emph{self-conjugate}.

The set of $k$-colored whole numbers is $\mathbb{N}_k = \{ a_b \vert a,b \in \mathbb{N}, 1 \leq b \leq k \}$, with $a_b \geq c_d \Leftrightarrow (a > c) \, \text{OR} \, (a=c \, \text{AND} \, b \geq d)$ and $\vert a_b + c_d \vert = a+c$, extended linearly.

The main object of interest in this paper is

\begin{definition} A $k$-colored partition of an integer $n$ is a weakly decreasing sequence $\lambda = (\lambda_1,\dots,\lambda_\ell)$ with all $\lambda_i \in \mathbb{N}_k$ such that $\vert \lambda_1 + \dots + \lambda_\ell \vert = n$.  A $(k,j)$-colored partition of $n$ is a $k$-colored partition in which at most $j$ colors appear for any given size of part.
\end{definition}

Denote the number of $(k,j)$-colored partitions of $n$ by $c_{k,j}(n)$.  The generating function of $c_{k,j}(n)$ is 

\begin{theorem} 
\begin{multline*}C_{k,j}(q) := \sum_{n=0}^\infty c_{k,j}(n) q^n = \prod_{n=1}^\infty \left( 1 + \frac{\binom{k}{1} q^n}{1-q^n} + \frac{\binom{k}{2} q^{2n}}{(1-q^{n})^2} + \dots + \frac{\binom{k}{j}q^{jn}}{(1-q^n)^j} \right) \\
= \frac{1}{{(q)_\infty}^j} \prod_{n=1}^\infty \left( \sum_{i=0}^j \binom{k}{i} (1-q^n)^{j-i} q^{in} \right) = \frac{1}{{(q)_\infty}^j} \prod_{n=1}^\infty \left( \sum_{i=0}^j \binom{k-j+i-1}{i} q^{i n} \right).
\end{multline*}
\end{theorem}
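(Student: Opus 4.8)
The plan is to prove the three expressions for $C_{k,j}(q)$ in turn: the first by decomposing a $(k,j)$-colored partition according to part size, the second by a one-line algebraic rearrangement, and the third by a generating-function identity, which is the only substantive step.

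First I would fix a part size $n$ and write down the generating function recording all parts equal to $n$ (in their various colors) occurring in a $(k,j)$-colored partition. By the definition, the set $S$ of colors used among these parts satisfies $|S| = i \le j$, each color of $S$ occurs with some positive multiplicity, and conversely any such data determines a legal block; since there are $\binom{k}{i}$ choices of $S$ with $|S| = i$, and each chosen color contributes $q^n + q^{2n} + \cdots = q^n/(1-q^n)$, the block of parts of size $n$ has generating function $\sum_{i=0}^{j}\binom{k}{i} q^{in}/(1-q^n)^i$. Distinct part sizes contribute independently, so $C_{k,j}(q)$ is the product of these over $n \ge 1$, which is the first displayed expression. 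For the second expression, inside the $n$th factor I would multiply and divide by $(1-q^n)^j$, turning that factor into $(1-q^n)^{-j}\sum_{i=0}^{j}\binom{k}{i}q^{in}(1-q^n)^{j-i}$, and then collect the product of the factors $(1-q^n)^{-j}$ into $(q)_\infty^{-j}$.

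The third expression reduces, on setting $x = q^n$, to the polynomial identity $\sum_{i=0}^{j}\binom{k}{i}x^i(1-x)^{j-i} = \sum_{i=0}^{j}\binom{k-j+i-1}{i}x^i$. To prove it I would start from the binomial theorem $\sum_{i\ge 0}\binom{k}{i}t^i = (1+t)^k$, substitute $t = x/(1-x)$, and multiply by $(1-x)^j$ to obtain the formal power series identity $\sum_{i=0}^{k}\binom{k}{i}x^i(1-x)^{j-i} = (1-x)^{j-k}$. Each summand with $i > j$ equals $x^i$ times a power series with constant term $1$, hence contributes only in degrees $\ge j+1$, whereas each summand with $i \le j$ is a polynomial of degree $\le j$; therefore the truncated sum $\sum_{i=0}^{j}\binom{k}{i}x^i(1-x)^{j-i}$, itself a polynomial of degree $\le j$, has the same coefficients of $x^0,\dots,x^j$ as $(1-x)^{j-k}$. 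Finally, upper negation gives $[x^i](1-x)^{j-k} = (-1)^i\binom{j-k}{i} = \binom{k-j+i-1}{i}$, and the identity follows.

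I expect the only real obstacle to be keeping this last step honest: when $j < k$ the series $(1-x)^{j-k}$ is not a polynomial, so one cannot naively equate the two sides coefficient by coefficient over $\{0,\dots,j\}$ without the observation that the extra terms in $\sum_{i=0}^{k}$ beyond those in $\sum_{i=0}^{j}$ are supported in degrees exceeding $j$. With that in hand the argument is routine, and the remaining equalities are purely formal.
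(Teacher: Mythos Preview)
Your proposal is correct and follows the same approach as the paper: the combinatorial reading of the product for the first expression and the common-denominator rearrangement for the second are exactly what the paper does. For the third equality the paper merely says ``expanding the sum,'' so your explicit argument via the substitution $t=x/(1-x)$ and degree bookkeeping supplies the detail the paper omits; this is a clean way to establish the polynomial identity and is entirely in the spirit of the paper's sketch.
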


\begin{proof} Each $\binom{k}{i}$ term in the first line arises from a choice, for part size $n$, of $i$ of the $k$ available colors, adding parts of size $n$ and the chosen colors to the partition, and as many more of the same size as desired.

The second line follows from collecting terms over a common denominator and expanding the sum $\sum_{i=0}^j \binom{k}{i} (1-q^n)^{j-i} q^{i n}.$
\end{proof}

\section{$(k,1)$-colored partitions}\label{K1}

One special case of interest are the partitions which might be described as generalizing overpartitions by allowing the last instance of a given part to be overlined, doubly overlined, etc.  Given $k$ colors, these are the partitions in which one of $k$ colors may be used per size of part.  The algebraic properties of the generating function and the combinatorial properties of the partitions are both usefully analyzable.

The generating function for $(k,1)$-colored partitions is

\begin{equation}\label{KOne}C_{k,1}(q) = \prod_{n=1}^\infty \left( 1+ \frac{k q^n}{1-q^n} \right) = \prod_{n=1}^\infty \frac{1+(k-1) q^n}{1-q^n} =: \sum_{n=0}^\infty f_n (k-1) q^n.
\end{equation}

The polynomials $f_n(u)$ are easily described: their coefficients count a certain subset of 2-colored partitions of $n$, and as a whole, the polynomial is a simple sum concerning the number of outer corners, or part sizes, in the partitions of $n$.  Listed as an irregular triangular array, they form OEIS sequence A008951 \cite{OEIS1}.  We summarize and prove these statements below.

\begin{theorem}\label{KOneFn} Let $f_n(u)$ be defined as above.  Say $f_n(u) = f_{n,0} + f_{n,1} u + \cdots f_{n,t_n} u^{t_n}$.  We have $\binom{t_n+1}{2} \leq n < \binom{t_n+2}{2}$, i.e., $n$ is at least the $t_n$-th positive triangular number but not the $(t_n+1)$-st. Further, we have several combinatorial descriptions of the $f_{n,i}$:

\begin{enumerate}
\item $f_{n,i}$ is the number of overpartitions of $n$ (i.e., $(2,1)$-colored partitions) in which exactly $i$ colors are marked;
\item $f_{n,i} = \sum_{\lambda \vdash n} \binom{\ell_o(\lambda)}{i}$, where $\ell_0(\lambda)$ is the number of sizes of parts appearing in $\lambda$;
\item $f_n = \sum_{\lambda \vdash n} (1+u)^{\ell_o(\lambda)}$, i.e. $f_n(k-1) = \sum_{\lambda \vdash n} k^{\ell_0(\lambda)}$;
\item $f_{n,i}$ is the number of partitions of $n-\binom{i+1}{2}$ with two colors of parts 1 through $i$.
\end{enumerate}
\end{theorem}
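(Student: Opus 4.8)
The plan is to read every claim off the factored generating function in (\ref{KOne}), namely $\prod_{n\ge 1}\frac{1+uq^n}{1-q^n}=\sum_{n\ge 0} f_n(u)q^n$, by tracking the variable $u$ (which will mark ``non-default'' colors) and then translating each algebraic statement into the asserted combinatorial one. For the degree bound, expand $\prod_{n}(1+uq^n)=\sum_{S}u^{|S|}q^{\sigma(S)}$ over finite sets $S$ of distinct positive integers, where $\sigma(S)=\sum_{s\in S}s$, and multiply by $1/(q)_\infty$, which has strictly positive coefficients. Then $f_{n,i}$, the coefficient of $u^iq^n$, is a sum of nonnegative contributions, and it is nonzero exactly when some $i$-element set $S$ has $\sigma(S)\le n$. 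Since the minimum of $\sigma(S)$ over $i$-element sets is $1+2+\cdots+i=\binom{i+1}{2}$, we get $f_{n,i}>0$ iff $n\ge\binom{i+1}{2}$; hence $t_n$ is the largest $i$ with $\binom{i+1}{2}\le n$, which is precisely the inequality $\binom{t_n+1}{2}\le n<\binom{t_n+2}{2}$.

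For item (1): in $\prod_n\frac{1+uq^n}{1-q^n}$ the factor $1+uq^n$ records whether the last copy of a part of size $n$ is marked (contributing $u$) or not, while $1/(1-q^n)$ supplies any further copies; this is exactly the standard overpartition generating function, refined by the number of marked (overlined) parts, so $f_{n,i}$ counts overpartitions of $n$ with exactly $i$ marked parts. For item (2): an overpartition is the same datum as an ordinary partition $\lambda$ together with a choice of which of its $\ell_o(\lambda)$ distinct part sizes to mark, so summing over $\lambda$ and counting the $\binom{\ell_o(\lambda)}{i}$ ways to choose an $i$-subset gives $f_{n,i}=\sum_{\lambda\vdash n}\binom{\ell_o(\lambda)}{i}$. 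Item (3) is then immediate from $\sum_i\binom{\ell_o(\lambda)}{i}u^i=(1+u)^{\ell_o(\lambda)}$; alternatively one sees it directly by rewriting $\frac{1+uq^n}{1-q^n}=1+(1+u)\frac{q^n}{1-q^n}$ and expanding the product, each distinct part size of $\lambda$ contributing a single factor $1+u$.

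For item (4), I would instead extract $[u^i]$ before clearing denominators. Using the classical fact that the generating function for partitions into exactly $i$ distinct positive parts is $q^{\binom{i+1}{2}}/(q;q)_i$ (obtained by subtracting the staircase $(i,i-1,\dots,1)$, of size $\binom{i+1}{2}$, to land on an arbitrary partition into at most $i$ parts), we get $[u^i]\prod_n(1+uq^n)=q^{\binom{i+1}{2}}/(q;q)_i$, hence $\sum_n f_{n,i}q^n=\dfrac{q^{\binom{i+1}{2}}}{(q)_\infty\,(q;q)_i}$. Now $1/(q)_\infty$ enumerates ordinary partitions and $1/(q;q)_i$ enumerates partitions into parts of size at most $i$, so their product enumerates partitions in which the parts $1,\dots,i$ carry two colors and larger parts one color; shifting the exponent back by $\binom{i+1}{2}$ gives that $f_{n,i}$ is the number of such partitions of $n-\binom{i+1}{2}$, which is (4).

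I do not expect a genuine obstacle: everything reduces to elementary manipulation of (\ref{KOne}) plus the two standard facts used above (strict positivity of $1/(q)_\infty$ and the staircase generating function for distinct parts). The one place requiring care is the bookkeeping in (1)--(2): verifying that ``exactly $i$ colors marked'' matches the coefficient of $u^i$ and that the correspondence between overpartitions and pairs (ordinary partition, subset of distinct part sizes) is a genuine bijection, including the edge cases $i=0$ and partitions with repeated parts.
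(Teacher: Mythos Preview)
Your proof is correct. For the degree bound and items (1)--(3) you argue essentially as the paper does, with the same generating-function reading and the same passage from (1) to (2) to (3) via the binomial theorem; if anything, your treatment of the degree bound is more explicit than the paper's one-line remark that $t_n$ is the maximum number of distinct part sizes in a partition of $n$.

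The genuine difference is in item (4). The paper constructs an explicit bijection: from an overpartition with $i$ marked part sizes, strip off one copy of each marked size to get a partition into $i$ distinct parts, subtract the staircase, conjugate to obtain parts of size at most $i$ (these become the second-color parts), and leave the remainder uncolored. You instead extract $[u^i]$ algebraically, invoke the identity $[u^i]\prod_n(1+uq^n)=q^{\binom{i+1}{2}}/(q;q)_i$, and then interpret $\frac{1}{(q)_\infty}\cdot\frac{1}{(q;q)_i}$ directly as two-colored partitions with the second color restricted to parts $\le i$. The two arguments encode the same combinatorics (staircase removal plus conjugation), but your generating-function route is shorter and avoids verifying that the map and its inverse are well-defined, while the paper's bijection has the advantage of being fully constructive and making the correspondence visible at the level of individual partitions.
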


\begin{proof}  Note that equivalently, $t_n$ is the maximum number of distinct part sizes in a partition of $n$.

Statement (1) simply interprets the generating function by noting that for each size of part, we may select unmarked or $u$-marked parts in the product which defines the partition.

Statement (2) follows from Statement (1) and the observation that in a partition with $\ell_0(\lambda)$ part sizes in which we are to mark $i$ of them, there are $\binom{\ell_0(\lambda)}{i}$ choices.

Statement (3) is the binomial summation of Statement (2) (and is derived differently as Example 8 in \cite{Fine}, p. 39).

Statement (4): We construct a bijection between the set of overpartitions with $i$ parts marked, and the set of partitions of $n-\binom{i+1}{2}$ with two colors possible for parts 1 through $i$ (of which both may be used for a given part size).

From a partition of $n-\binom{i+1}{2}$ with two colors of parts 1 through $i$, take those parts of the second color and conjugate to form a partition with at most $i$ parts.  To these, add the triangle $i$, $i-1$, et cetera, to form a partition into exactly $i$ distinct parts.  Now take the union of these parts with the uncolored parts remaining in the starting partition, and consider as having the second color all parts of sizes originating in the conjugate-plus-triangle partition, and as having the first color all parts of any other size.

The reverse bijection from a $(2,1)$-colored partition is to take one of each part size of the second color to form a partition into distinct parts, uncolor all remaining parts, subtract a triangle of size equal to the number of colored parts taken, conjugate, and take the union of these with the uncolored parts.

$$\tiny\young(\blacksquare::::::,\hfil\blacksquare:::::,\hfil\hfil:::::,\hfil\hfil\hfil\hfil\hfil::,\hfil\hfil\hfil\hfil\hfil\hfil\hfil) \, \leftrightarrow \, 
\tiny\young(\blacksquare::,\hfil\blacksquare:,\hfil\hfil:,\hfil\hfil:,\hfil\hfil\blacksquare) + 
\tiny\young(\hfil\hfil\hfil:,\hfil\hfil\hfil\hfil)
\, \leftrightarrow \,
\tiny\young(\blacksquare\blacksquare,\hfil\hfil)
 +
 \tiny\young(\hfil\hfil\hfil:,\hfil\hfil\hfil\hfil)
 \, \leftrightarrow \, 
 \tiny\young(\hfil\hfil\hfil\blacksquare\blacksquare:,\hfil\hfil\hfil\hfil\hfil\hfil)$$

\end{proof}

Statement (3) of Theorem \ref{KOneFn} is provocative when we consider the Han/Nekrasov-Okounkov hook length formula, which expands the product

\[ \sum_{n=0}^\infty p_n(b) q^n := \prod_{n=1}^\infty (1-q^n)^{b-1} = \sum_{n=0}^\infty q^n \sum_{\lambda \vdash n} \prod_{h_{ij} \in \lambda} (1-\frac{b}{h_{ij}^2}), \]

\noindent where the $h_{ij}$ are the hooklengths that appear in the Ferrers diagram of $\lambda$.

If we allow the "number of colors" $k$ in the generating function $C_{k,j}(q)$ to be the indeterminate $k = 1-b$, and let $j$ be $\infty$, i.e, allow it to increase without bound, then $C_{k,\infty} (q)$ becomes the quantity above:

\[C_{1-b,\infty (q)} = \prod_{n=1}^\infty \sum_{i=0}^\infty \binom{k}{i} \frac{q^{i n}}{(1-q^n)^i} = \prod_{n=1}^\infty \left(1 + \frac{q^n}{1-q^n} \right)^{1-b} = \prod_{n=1}^\infty \left( \frac{1}{1-q^n} \right)^{1-b}. \]

Then choosing any whole number value for $j$ is a truncation of this series.  A natural question is whether truncations have pleasing combinatorial relationships with the hook length formula.

In the case $j=1$, the answer is yes.  Observe that truncation at $j=1$ gives

\[ C_{1-b,1} (q) = \prod_{n=1}^\infty \left(1 + \frac{(1-b)q^n}{1-q^n} \right) = \prod_{n=1}^\infty \frac{1-b q^n}{1-q^n}. \]

Since $u = k-1 = 1-b-1 = -b$, statement (3) of the theorem tells us that $$f_n = \sum_{\lambda \vdash n} (1-b)^{\ell_o(\lambda)} = \sum_{\lambda \vdash n} \prod_{{h_{ij} \in \lambda} \atop { h_{ij} = 1}} (1-\frac{b}{h_{ij}^2}).$$

Thus, $C_{1-b,1}(q)$ is exactly the hook length formula if we were restricted to only considering hooks of size 1.

For $C_{1-b,j}(q)$ with $j>1$, equality of truncations does not precisely hold, but an interesting relationship does still occur.  For more on this, see section \ref{Truncations}.

A useful combinatorial map of order $k$ on $k$-colored partitions is rotation of the colors.  When $j=1$, we have no worries about degenerate cases in which rotation through less than $k$ colors will repeat a configuration.  Letting $\nu_i(n)$ be the number of partitions of $n$ in which exactly $i$ sizes of part appear, we may state that for $n \geq 1$,

\[ c_{k,1} (n) = \sum_{i=1}^\infty k^i \nu_i (n). \]

This is, of course, equivalent to clause (3) of Theorem \ref{KOneFn}.

This immediately gives us a number of congruences.  For $n>0$, we have 

\begin{align*}
c_{k,1}(n) &\equiv 0 \, \text{ (mod } k), \\ c_{k,1}(n) &\equiv k \nu_1(n) \, \text{ (mod } k^2), \\ c_{k,1}(n) &\equiv k\nu_1(n) + k^2 \nu_2(n) \, \text{(mod } k^3), \text{ etc.}
\end{align*}

Note that, of course, $\nu_1(n) = d(n)$, the number of divisors of $n$.

It is known that overpartitions satisfy stronger congruences mod $2^i$ in certain arithmetic progressions, and congruences modulo powers of other primes as well.  Those are not explained by this elementary relation, and thus in some sense could be considered "unexpected": such a congruence contains information about uncolored partitions.

In \cite{Kim}, Byungchan Kim showed, for instance, that $\overline{p}(n)$, the number of overpartitions of $n$, satisfies

$$\overline{p}(n) \equiv 0 \, \text{ (mod } 8) \, \text{ if } \, n \neq k^2, 2k^2$$

\noindent for any integer $k$.  Since we know $$\overline{p}(n) \equiv 2 \nu_1(n) + 4 \nu_2(n) \, \text{ (mod } \, 8),$$

\noindent this is equivalent to the assertion that for $n \neq k^2$ or $2k^2$, we have $\nu_2$ and $\frac{\nu_1}{2}$ simultaneously either both even or both odd.

A rather stranger consequence follows from $$\overline{p}(9^{\alpha}(27n+18)) \equiv 0 \pmod{3}$$ \noindent from \cite{HirschSell}, which thus gives us that for $m=9^{\alpha}(27n+18)$, $$-\nu_1(m) + \nu_2(m) - \nu_3(m) + \nu_4(m) - \dots \equiv 0 \pmod{3}.$$

We will not dwell on overpartitions directly.  However, many otherwise surprising congruences for $(k,1)$-colored partitions are a result of the fact that $(k,1)$-colored partitions are congruent to overpartitions mod $k-2$:

\begin{lemma}\label{ModOverPs} $$\prod_{n=1}^\infty \left(1 + \frac{\binom{k}{1} q^n}{1-q^n} \right) = \prod_{n=1}^\infty \frac{1+(k-1)q^n}{1-q^n} \equiv_{k-2} \prod_{n=1}^\infty \frac{1+q^n}{1-q^n} = \sum_{n=0}^\infty \overline{p}(n)q^n.$$
\end{lemma}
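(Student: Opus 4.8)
The plan is to reduce the statement to a termwise congruence of the Euler factors and then check that this congruence survives the passage to the infinite product. The key elementary fact is that $k-1 \equiv 1 \pmod{k-2}$, so for every $n$ we have $1+(k-1)q^n \equiv 1+q^n \pmod{k-2}$ as polynomials with integer coefficients. The denominators $\tfrac{1}{1-q^n}$ are literally identical on the two sides, so nothing needs to be said about them beyond the observation that $\tfrac{1}{(q)_\infty}=\sum_n p(n)q^n$ has integer coefficients.

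The one point requiring a word of justification is why a congruence holding factor-by-factor implies the congruence of the (infinite) products $\prod_n (1+(k-1)q^n)$ and $\prod_n(1+q^n)$. I would handle this by computing the coefficient of $q^m$ in $\prod_{n\ge 1}(1+(k-1)q^n)$ directly: it equals $\sum_{\mu}(k-1)^{\ell(\mu)}$, the sum over partitions $\mu$ of $m$ into distinct parts, weighted by $(k-1)$ to the number of parts. Since $k-1\equiv 1\pmod{k-2}$ gives $(k-1)^{\ell}\equiv 1\pmod{k-2}$ for every exponent $\ell\ge 0$, this coefficient is congruent mod $k-2$ to $\sum_{\mu}1$, which is exactly the coefficient of $q^m$ in $\prod_{n\ge1}(1+q^n)$. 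Hence $\prod_{n\ge1}(1+(k-1)q^n)\equiv_{k-2}\prod_{n\ge1}(1+q^n)$. (Alternatively, one can induct on the truncated products $\prod_{n\le N}$, noting that only finitely many of them affect any fixed coefficient; the direct computation is cleaner.)

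Finally I would multiply both sides of this congruence by the common integer-coefficient series $\tfrac{1}{(q)_\infty}$: if $f\equiv_{m} g$ and $h$ has integer coefficients then $fh\equiv_m gh$, since each coefficient of $fh-gh$ is a finite integer combination of coefficients of $f-g$. Applying this with $m=k-2$, $f=\prod_n(1+(k-1)q^n)$, $g=\prod_n(1+q^n)$, and $h=\tfrac{1}{(q)_\infty}$ yields
\[
\prod_{n=1}^\infty \frac{1+(k-1)q^n}{1-q^n}\;\equiv_{k-2}\;\prod_{n=1}^\infty \frac{1+q^n}{1-q^n}\;=\;\sum_{n=0}^\infty \overline{p}(n)q^n,
\]
which is the claim. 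There is no real obstacle here; the only thing to be careful about is making the factor-to-product step rigorous rather than asserting it, and the divisor-sum-style expansion of the numerator product does that transparently.
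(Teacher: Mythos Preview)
Your argument is correct. The paper itself states this lemma without proof, evidently regarding it as immediate from the observation that $k-1\equiv 1\pmod{k-2}$ in the numerator factors; your write-up makes explicit the coefficient-level justification (via the distinct-parts expansion and multiplication by the integer-coefficient series $1/(q)_\infty$) that the paper leaves to the reader.
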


Combined with the fact that $c_{k,1}(n) \equiv 0$ (mod $k$) for $n>1$, we obtain numerous composite congruences.  The following congruences all follow immediately from the previous lemma and facts known for overpartitions.

\begin{proposition} 
\begin{align*}
c_{5,1} (24n+19) &\equiv 0 \, \text{(mod } 15) \\
c_{11,1} (24n+19) &\equiv 0 \, \text{(mod } 99) \\
c_{29,1} (24n+19) &\equiv 0 \, \text{(mod } 783) \\
c_{5,1} (9^\alpha (27n+18)) &\equiv 0 \, \text{(mod } 15) \\
c_{7,1} (4^\alpha (40n+35)) &\equiv 0 \, \text{(mod } 35) \\
c_{17,1} (24n+19) &\equiv 0 \, \text{(mod } 51) \\
c_{17,1} (40n+35) &\equiv 0 \, \text{(mod } 85) \\
c_{17,1} (120n+115) &\equiv 0 \, \text{(mod } 255)
\end{align*}
\end{proposition}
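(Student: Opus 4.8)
The plan is short: every line follows by gluing together two facts already established in this section, via the Chinese Remainder Theorem, with known congruences for the overpartition function supplying the last ingredient. The first fact is the elementary divisibility $c_{k,1}(n) \equiv 0 \pmod{k}$ for $n \geq 1$, immediate from $c_{k,1}(n) = \sum_{i=1}^{\infty} k^i \nu_i(n)$. The second is Lemma \ref{ModOverPs}, which gives $c_{k,1}(n) \equiv \overline{p}(n) \pmod{k-2}$, and therefore $c_{k,1}(n) \equiv \overline{p}(n) \pmod{d}$ for every divisor $d$ of $k-2$.

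The key structural point is that each modulus $M$ in the statement factors as $M = k\,d$, where $d \mid (k-2)$ and $\gcd(k,d)=1$: indeed $15 = 5\cdot 3$, $99 = 11\cdot 9$, $783 = 29\cdot 27$, $35 = 7\cdot 5$, $51 = 17\cdot 3$, $85 = 17\cdot 5$, and $255 = 17\cdot 15$. Given such a factorization, it suffices to know that $\overline{p}(N) \equiv 0 \pmod{d}$ on the relevant arithmetic progression $N$; then divisibility of $c_{k,1}(N)$ by $k$ comes from the first fact, divisibility by $d$ from the second together with the overpartition congruence, and coprimality of $k$ and $d$ yields $c_{k,1}(N) \equiv 0 \pmod{M}$. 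The overpartition inputs needed are the known congruences $\overline{p}(24n+19) \equiv 0$ modulo $3$, $9$, and $27$ (handling the $c_{5,1}$, $c_{11,1}$, $c_{29,1}$, and $c_{17,1}$ cases on the progression $24n+19$); $\overline{p}(4^{\alpha}(40n+35)) \equiv 0 \pmod{5}$ (handling $c_{7,1}$, and with $\alpha = 0$ the progression $40n+35$ for $c_{17,1}$); and the Hirschhorn--Sellers congruence $\overline{p}(9^{\alpha}(27n+18)) \equiv 0 \pmod{3}$ recalled above (handling $c_{5,1}$ on that progression). For the last line, observe that $120n+115 = 24(5n+4)+19 = 40(3n+2)+35$, so $120n+115$ lies in both the progression $24n+19$ and the progression $40n+35$; hence $\overline{p}(120n+115)$ is divisible by $3$ and by $5$, so by $15$, and $M = 255 = 17\cdot 15$ follows as before.

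There is no obstacle of real depth: the substance is all in Lemma \ref{ModOverPs} and in the cited overpartition congruences (see \cite{Kim}, \cite{HirschSell}, \cite{XiaYao}, \cite{CHSZ} and references therein), and what remains is bookkeeping. The two things to check carefully are (i) that in each line $M$ does split as $k$ times a divisor of $k-2$ coprime to $k$, so the two modular pieces can legitimately be combined, and (ii) that each stated progression matches the correct overpartition result --- in particular recognizing $120n+115$ as a common refinement of $24n+19$ and $40n+35$. If a self-contained treatment were wanted, each underlying overpartition congruence could be reproved by a standard $3$- or $5$-dissection of $\overline{P}(q) = (q^2;q^2)_\infty/(q;q)_\infty^2$, but invoking the known results is the efficient path.
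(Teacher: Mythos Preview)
Your argument is correct and follows exactly the approach taken in the paper: combine the elementary divisibility $c_{k,1}(n)\equiv 0\pmod{k}$ with Lemma~\ref{ModOverPs} and the cited overpartition congruences, then glue via the Chinese Remainder Theorem. The paper's proof is simply a terser version of what you wrote, pointing to Xia--Yao for $\overline{p}(24n+19)\equiv 0\pmod{27}$, Hirschhorn--Sellers for the $9^\alpha(27n+18)$ case, and Chen--Sun--Wang--Zhang (\cite{CSWZ}, not \cite{CHSZ}) for the $4^\alpha(40n+35)$ congruence mod~5; your explicit verification that $120n+115$ lies in both progressions $24n+19$ and $40n+35$ matches the paper's remark that the $c_{17,1}$ cases follow by combining the mod~3 and mod~5 facts.
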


\begin{proof} Xia and Yao prove $\overline{p}(24n+19) \equiv 0$ (mod $3^3$) in \cite{XiaYao}, giving the first three congruences.  Hirschhorn and Sellers show $\overline{p}(9^\alpha(27n+18)) \equiv 0$ (mod 3) in \cite{HirschSell}.  That $\overline{p}(40n+35) \equiv 0$ (mod 5) was conjectured by Hirschhorn and Sellers and proved by Chen and Xia; the more general $4^k(40n+35)$ was proved by W. Y. C. Chen, Sun, Wang and Zhang very recently \cite{CSWZ}, in a paper which also includes many other congruences mod 5 for the overpartition function that can be combined with modulus 7 for $c_{7,1}$.

The $c_{17,1}$ congruences follow from the fact that congruences modulo 15 reduce to congruences modulo both 3 and 5, and the combination of these gives the latter.  

Bringmann and Lovejoy showed \cite{BrLoIRMN} that infinitely many non-nested arithmetic progressions $An+B$ exist for which $\overline{p}(An+B) \equiv 0$ (mod $\ell$) for any desired prime $\ell \geq 5$, which can be employed in this fashion.
\end{proof}

Similarly to the preceding arguments, $(k,1)$-colored partitions share congruences with the usual partition function modulo $k-1$.  The process is exactly the same as the above, and so we will forbear to detail examples.

It is perhaps worthy of special mention, though, that this means $c_{2k,1}(n)$ shares the parity of overpartitions, $c_{4,1}(n)$ specifically the residue class mod 3 of the usual partition function, while $c_{3,1}(n)$ (indeed, any$c_{2k-1,1}(n)$) shares the parity of the usual partition function.

All of the above congruences have odd modulus.  The case for even modulus is slightly different, as it not only shares the parity of overpartitions, but can achieve congruences modulo greater powers of 2.  For instance, $c_{4,1}(n) \equiv 4\nu_1(n) + 16 \nu_2(n) \pmod{64}$.  Hence we digress briefly to consider $\nu_1$ and $\nu_2$ mod powers of 2.

In the former case, $\nu_1 (n) = d(n)$, the divisor function, which is perfectly understood.  If the factorization of $n$ into primes is $n = p_1^{\alpha_1} p_2^{\alpha_2} \dots$, then $$d(n) = (\alpha_1 + 1)(\alpha_2 + 1) \dots.$$

We thus observe $\nu_2$.  The quantities $\nu_i (n)$ were studied by Major MacMahon and more recently George Andrews \cite{GEA1}, who gave the generating function

$$N_k(q): = \sum_{n=0}^\infty \nu_k(n) q^n = \frac{1}{(q;q)_\infty} \sum_{m=0}^\infty \frac{(-1)^{m-k} \binom{m}{k}q^{m(m+1)/2}}{(q;q)_m}.$$

$N_2$ itself was more recently studied by Tani and Bouroubi \cite{BT}, who gave some restricted formulas.  We are interested in the 2-adic valuation of the coefficients of $N_2$.  First, we have an infinite class of progressions in which $\nu_2$ is even:

\begin{theorem}\label{V2Mod2}
\begin{align*}
\nu_2(4n+2) &\equiv 0 \pmod{2} \\
\nu_2(9n+6) &\equiv 0 \pmod{2} \\
\nu_2(25n+10) &\equiv 0 \pmod{2} \\
\dots & \\
\end{align*}
\end{theorem}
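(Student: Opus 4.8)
The plan is to derive every line of the theorem --- and, more generally, $\nu_2\bigl(p^2 n + 2p\bigr) \equiv 0 \pmod 2$ for each odd prime $p$ for which $2$ is a quadratic non-residue (equivalently $p \equiv 3$ or $5 \pmod 8$), with the line $4n+2$ the corresponding statement for $p = 2$ --- from one involution together with a short count of divisors.

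First I would use conjugation. Since conjugation preserves the number of distinct part sizes, it restricts to an involution $\iota$ on the set $S_N$ of partitions of $N$ having exactly two distinct part sizes: such a partition has the form $\lambda = (a^{r}, b^{s})$ with $a > b \geq 1$, $r, s \geq 1$, and $\iota(\lambda) = \bigl((r+s)^{b}, r^{\,a-b}\bigr)$, which again has exactly two distinct part sizes because $s \geq 1$ and $a - b \geq 1$. Counting $\iota$-orbits gives $\nu_2(N) = |S_N| \equiv |\mathrm{Fix}(\iota)| \pmod 2$, where $\mathrm{Fix}(\iota)$ consists of the self-conjugate members of $S_N$. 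Matching $(a^r, b^s)$ against its conjugate shows $\lambda$ is self-conjugate exactly when $a = r+s$ and $b = r$, so the self-conjugate partitions in $S_N$ are precisely $\bigl((r+s)^{r}, r^{s}\bigr)$ with $r, s \geq 1$; these partition $N$ iff $N = r^2 + 2rs$. Setting $d = r$ and $e = r + 2s$ converts this into: $|\mathrm{Fix}(\iota)|$ is the number of factorizations $N = de$ with $0 < d < e$ and $d \equiv e \pmod 2$.

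Next I would evaluate that count by the residue of $N$ modulo $4$. If $N \equiv 2 \pmod 4$, every factorization of $N$ pairs an even factor with an odd one, so $|\mathrm{Fix}(\iota)| = 0$; this already gives $\nu_2(4n+2) \equiv 0 \pmod 2$. If $N$ is odd, every divisor pair is admissible, so $|\mathrm{Fix}(\iota)| = \lfloor d(N)/2 \rfloor$. If $4 \mid N$, only pairs of even factors qualify, and halving both shows $|\mathrm{Fix}(\iota)| = \lfloor d(N/4)/2 \rfloor$. For $N = 9n+6 = 3(3n+2)$, $N = 25n+10 = 5(5n+2)$, and in general $N = p^2 n + 2p = p\,(pn+2)$, write $N = p\,m$ with $p \nmid m$ and $m \equiv 2 \pmod p$. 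When $N$ is odd, $v_p(N) = 1$, so $N$ is not a perfect square, $d(N) = 2\,d(m)$, and $\nu_2(N) \equiv d(m) \pmod 2$; since $2$ is a non-residue modulo $p$, $m$ cannot be a perfect square, so $d(m)$ is even. When $4 \mid N$ (with $p$ odd) we have $4 \mid m$, and writing $m = 4m'$ gives $N/4 = p\,m'$ with $m' \equiv 2^{-1} \pmod p$, again a non-residue; so $m'$ is not a perfect square, $d(N/4) = 2\,d(m')$, and $\nu_2(N) \equiv d(m') \equiv 0 \pmod 2$. The remaining case $N \equiv 2 \pmod 4$ was handled above.

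The involution is essentially forced and gives no trouble; the real content is the last step, guaranteeing that the perfect-square indicators vanish and that the surviving count $d(m)$ (or $d(m')$) is even. That is precisely where the modulus $p^2$ of the progression --- which forces $v_p(N) = 1$ --- and the hypothesis that $2$ be a quadratic non-residue modulo $p$ enter, and it is also what decides which primes admit such a congruence: the argument visibly fails for $p = 7$, where $2$ is a residue and $\nu_2(63)$ is in fact odd.
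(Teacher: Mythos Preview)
Your argument is correct and follows essentially the same route as the paper: reduce modulo~$2$ via the conjugation involution on partitions with exactly two part sizes, identify the fixed points with factorizations $N=de$, $d<e$, $d\equiv e\pmod 2$, and then count these via divisors. The only organizational difference is that the paper packages the divisor step as the standalone statement ``$d(m)\equiv 0\pmod 4 \Rightarrow \nu_2(m)\equiv 0\pmod 2$'' and deduces the progressions $p(pn+r)$ from it, whereas you argue the parity of $d(m)$ (respectively $d(m')$) directly for each progression; the underlying arithmetic is the same.
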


The latter two sequences are of the form $p(pn+r)$ with $r$ a quadratic nonresidue mod $p$.  For these we prove the more general theorem,

\begin{theorem} If two or more primes appear to odd order in the prime factorization of $m$, i.e. $d(m) \equiv 0$ (mod 4), then $\nu_2(m) \equiv 0$ (mod 2).
\end{theorem}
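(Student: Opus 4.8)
The plan is to reduce the parity of $\nu_2(m)$ to a divisor count via a conjugation involution, and then to analyze that count $2$-adically. First I would observe that conjugation of Ferrers diagrams restricts to an involution on the set of partitions of $m$ with exactly two distinct part sizes: the conjugate of $a^{r}b^{s}$ (meaning $r\ge1$ parts of size $a$ together with $s\ge1$ parts of size $b$, where $a>b\ge1$) is $(r+s)^{b}\,r^{\,a-b}$, which again has exactly two part sizes. Hence $\nu_2(m)\equiv s_2(m)\pmod2$, where $s_2(m)$ counts the self-conjugate partitions of $m$ with exactly two part sizes. Solving $\lambda=\lambda'$ inside this family forces $b=r$ and $a=r+s$, so those partitions are exactly $(r+s)^{r}\,r^{\,s}$ with $r,s\ge1$, i.e. the solutions of $m=r(r+2s)$; setting $d=r$ and $m/d=r+2s$ this reads
\[
s_2(m)=\#\{\,d\mid m \;:\; d<m/d \ \text{ and } \ d\equiv m/d\pmod2\,\}.
\]

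Next I would evaluate this count. Put $S=\{\,d\mid m : d\equiv m/d\pmod2\,\}$. The hypothesis forces $d(m)\equiv0\pmod4$, so $m$ is not a perfect square; hence $d\mapsto m/d$ is a fixed-point-free involution of the divisor set which preserves $S$, and $s_2(m)=|S|/2$. Writing $m=2^{e}m'$ with $m'$ odd, a divisor $d=2^{a}u$ (with $u\mid m'$) satisfies $d\equiv m/d\pmod2$ precisely when $d$ and $m/d$ are both even, i.e. when $1\le a\le e-1$; for $e=0$ every divisor lies in $S$. Therefore $|S|=d(m)$ when $e=0$, $|S|=0$ when $e=1$, and $|S|=(e-1)\,d(m')$ when $e\ge2$. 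It remains to show $|S|\equiv0\pmod4$, which makes $s_2(m)$ — and hence $\nu_2(m)$ — even.

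The last step is a short case analysis that genuinely uses the hypothesis in the form ``at least two primes divide $m$ to an odd power'' (which is strictly stronger than $d(m)\equiv0\pmod4$: e.g. $d(8)=4$ but $\nu_2(8)=13$). If $m$ is odd, at least two of the factors $\alpha_i+1$ of $d(m)$ are even, so $4\mid d(m)=|S|$. If $e=1$, then $|S|=0$. If $e\ge2$ is even, then $2$ occurs to even order, so at least two odd primes occur to odd order, giving $4\mid d(m')$ and hence $4\mid|S|$. If $e\ge3$ is odd, then $2$ is one odd-order prime and at least one further (odd) prime occurs to odd order, so $2\mid d(m')$; together with $e-1$ even this yields $4\mid(e-1)\,d(m')=|S|$. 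In each case $|S|\equiv0\pmod4$, finishing the proof.

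I expect the main obstacle to be organizational rather than conceptual: getting the self-conjugate classification exactly right — with the inequalities $r,s\ge1$ and the parity constraint $d\equiv m/d$ — and then keeping the $2$-adic case analysis honest, in particular realizing that one must invoke ``two or more primes to odd order'' (not just $d(m)\equiv0\pmod4$) to dispose of the case in which $2$ divides $m$ to an odd power.
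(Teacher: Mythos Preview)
Your proof is correct and follows essentially the same strategy as the paper: reduce the parity of $\nu_2(m)$ to the count of self-conjugate two-part-size partitions via conjugation, parametrize these as $m=r(r+2s)$ (the paper writes $m=a(a+2b)$), translate to a divisor condition $d\equiv m/d\pmod 2$, and finish with a case analysis on the $2$-adic valuation of $m$. Your explicit formula $|S|=(e-1)d(m')$ and four-way split on $e$ is a somewhat cleaner bookkeeping than the paper's reduction to $m'=m/4$, and your remark that the proof genuinely needs ``two primes to odd order'' rather than merely $d(m)\equiv 0\pmod 4$ is a point the paper's theorem statement elides but its proof also silently assumes.
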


\begin{proof}
The number of distinct part sizes of a partition is invariant under conjugation.  The parity of a set of partitions closed under conjugation is equal to the parity of its self-conjugate subset.  Any self-conjugate partition into two sizes of parts can be described as follows:

\begin{center}
\begin{tikzpicture}[scale=0.8]
\draw (2,0) -- (2,3) -- (0,3) -- (0,0) -- (3,0) -- (3,2) -- (0,2);
\draw (0.2, 1) node {a};
\draw (1,0.2) node {a};
\draw (0.2,2.5) node {b};
\draw (2.5,0.3) node {b};
\end{tikzpicture}
\end{center}

For $m$ being the integer partitioned, we thus have $$m=a^2 + 2ab = a(a+2b) = (a+b)^2 - b^2.$$ 

It is elementary that $m=4n+2$ cannot be written as a difference of squares.  Thus there are no self-conjugate partitions of $m=4n+2$ into exactly 2 part sizes, so $\nu_2(4n+2) \equiv 0$ (mod 2).

One notes that $a \equiv a+2b$ (mod 2), so any $a$ which describes a self-conjugate partition of $m$ into two sizes of part must be of the same parity as $m$: so $a$ is odd if $m$ is odd, no suitable $a$ exists if $m=4n+2$, and $a$ must be even if $4 \vert m$.  Since $b$ is determined by  $a$ and is an integer as long as $m/a - a$ is even, each $a$ will give a unique self-conjugate partition of $m$ into exactly two part sizes as long as $$a \vert m, \quad a < \sqrt{m}, \quad a \equiv m \, \text{(mod } 2), \quad s_2(a) < s_2(m) \, \text{ or both are 0}.$$

Here $s_p(n)$ is the power of p that divides $n$.  Now suppose $$m = p_a^{\alpha_0} p_b^{\alpha_1} p_1^{\beta_1} \dots p_r^{\beta_r}$$

\noindent is the prime factorization of $m$ with $\alpha_0$, $\alpha_1$ odd (as some $\beta_i$ may be).  If $p_a$, $p_b$, or any $p_i$ is 2, then the relevant exponent must be at least 2.

If $m$ is odd, all divisors of $m$ are odd and so all divisors $a \vert m$, $a < \sqrt{m}$ give self-conjugate partitions of two part sizes.  Since $m$ has two odd exponents among its prime factorization, $\sqrt{m}$ is not a factor of $m$, and since $d(m) \equiv 0$ (mod 4), half this gives $\nu_2 (m) \equiv 0$ (mod 2).

If $m$ is even, then let $$m^\prime = \frac{m}{4} = 2^{-2} p_a^{\alpha_0} p_b^{\alpha_1} p_1^{\beta_1} \dots p_r^{\beta_r}.$$

If $p_a = 2$ or $p_b = 2$, the resulting exponent $\alpha_0 - 2$ or $\alpha_1 - 2$ is still positive and odd; if some $p_i = 2$, the exponent may be 0 and is irrelevant regardless.  We still have that $d(m^\prime) \equiv 0$ (mod 4).

For any self-conjugate partition of $m$, say $a=2c$, so $a+2b = 2(c+b)$.  Then $m^\prime = c(c+b)$, $b>0$.  The parity of $c$ does not matter: any $c < \sqrt{m^\prime}$ yields a suitable factorization since $a = 2c$ has 2-valuation at least 1 less than $m$, and $c < \sqrt{m^\prime} \Longrightarrow a = 2c < \sqrt{m}$.  We thus have a resulting self-conjugate partition of $m$, and all such partitions so arise.  Thus again $\nu_2(m) \equiv 0$ (mod 2).

\end{proof}

Numerical experimentation to date has so far yielded congruences only modulo 2 and 4 for $\nu_2$ and $\nu_3$ (and none for higher $\nu_i$). We venture no conjecture as to whether congruences for other moduli might exist in large arithmetic progressions.

This argument can establish parity but in order to establish congruences mod 4 we would need an involution other than conjugation, or a different technique entirely.  This was the case for

\begin{theorem} $v_2(16n+14) \equiv 0 \pmod{4}$.
\end{theorem}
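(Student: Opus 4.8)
The plan is to route the problem through the ordinary divisor function. Starting from $\sum_{n\ge 0}\nu_2(n)q^{n}=\sum_{a>b\ge 1}\frac{q^{a}}{1-q^{a}}\cdot\frac{q^{b}}{1-q^{b}}$ and symmetrizing — using $\left(\sum_{a\ge 1}\frac{q^{a}}{1-q^{a}}\right)^{2}=\sum_{a\ge 1}\frac{q^{2a}}{(1-q^{a})^{2}}+2\sum_{a>b\ge 1}\frac{q^{a+b}}{(1-q^{a})(1-q^{b})}$ together with the Lambert identity $\sum_{a\ge 1}\frac{q^{2a}}{(1-q^{a})^{2}}=\sum_{n\ge 1}(\sigma(n)-d(n))q^{n}$ — one obtains the convolution formula
\[ 2\nu_2(n)=\sum_{k=1}^{n-1}d(k)\,d(n-k)-\sigma(n)+d(n). \]
So it is enough to prove $\sum_{k=1}^{N-1}d(k)d(N-k)\equiv\sigma(N)-d(N)\pmod 8$ for $N=16n+14$. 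I would then reduce the self-convolution of $d$ modulo $8$: writing $D(q)=\sum_{m\ge 1}\frac{q^{m}}{1-q^{m}}=\sum_{n\ge 1}d(n)q^{n}$ and using that $d(n)$ is odd exactly when $n$ is a perfect square, expand $D(q)\equiv\Theta(q)+2A(q)+4B(q)\pmod 8$, where $\Theta(q)=\sum_{j\ge 1}q^{j^{2}}$ and $A,B$ have coefficients in $\{0,1\}$; the coefficient $a(m)=[q^{m}]A(q)$ is determined by $a(m)=1\iff d(m)\equiv 2,3\pmod 4$. Squaring and discarding multiples of $8$ gives $D(q)^{2}\equiv\Theta(q)^{2}+4\,\Theta(q)A(q)+4\,A(q)^{2}\pmod 8$, and since $A(q)^{2}\equiv A(q^{2})\pmod 2$, only $A$ modulo $2$ enters those last two terms.

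The crucial point is that $N=16n+14=2(8n+7)$ and $8n+7\equiv 3\pmod 4$, so some prime $\equiv 3\pmod 4$ divides $8n+7$, hence $N$, to an odd power; therefore $N$ is \emph{not} a sum of two squares. Consequently the ``main term'' drops out: $[q^{N}]\Theta(q)^{2}=\#\{(i,j):i,j\ge 1,\ i^{2}+j^{2}=N\}=0$, so
\[ [q^{N}]D(q)^{2}\;\equiv\;4\left(\ \sum_{j\ge 1,\ j^{2}<N}a(N-j^{2})\ +\ a(M)\right)\pmod 8, \]
where $M=8n+7$. Combining this with $\sigma(N)=3\sigma(M)$, $d(N)=2d(M)$, and with the fact that $4\mid\sigma(N)-d(N)$ — which follows from Theorem~\ref{V2Mod2} applied to $16n+14=4(4n+3)+2$, or directly since $8n+7\equiv 3\pmod 4$ forces a factor $\equiv 0\pmod 4$ inside $\sigma(8n+7)$ — the assertion $2\nu_2(N)\equiv 0\pmod 8$ reduces to the purely arithmetic identity
\[ \sum_{j\ge 1,\ j^{2}<N}a(N-j^{2})\ +\ a(M)\ \equiv\ \frac{\sigma(N)-d(N)}{4}\pmod 2. \]

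I expect this last step to be the main obstacle. Since $a(m)$ is not multiplicative, $\sum_{j^{2}<N}a(N-j^{2})$ is a genuine shifted convolution against the squares; one must unwind $a(m)$ through the prime factorization of $m$ and then cancel the contributions of the integers $N-j^{2}$ — each of which is $\equiv 5,10,13$, or $14\pmod{16}$ — in pairs, in the same spirit in which non-representability of $8n+7$ as a sum of two squares killed the $\Theta(q)^{2}$ term, finally matching what survives against a direct $2$-adic evaluation of $3\sigma(M)-2d(M)$. A purely combinatorial alternative would be to exhibit, on the set of partitions of $16n+14$ into exactly two part sizes, a second involution commuting with conjugation such that conjugation, that involution, and their composite are all fixed-point-free, forcing every orbit of the resulting Klein four-group action to have size $4$; the difficulty there is producing the second involution and verifying that these fixed-point conditions hold precisely for integers of the form $16n+14$ — analogously to how $16n+14\equiv 2\pmod 4$ is exactly what rules out self-conjugate two-part-size partitions.
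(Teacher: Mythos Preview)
Your reduction is sound and in fact recapitulates the paper's approach almost exactly: the convolution identity $2\nu_2(n)=\sum_{k=1}^{n-1}d(k)d(n-k)-\sigma_1(n)+d(n)$, the observation that $N=16n+14$ is not a sum of two squares so the $\Theta^{2}$ term dies, and the isolation of the residual terms coming from $N=j^{2}+(\text{something with }d\equiv 2\bmod 4)$ all appear in the paper in slightly different packaging. Your $2$-adic expansion $D\equiv\Theta+2A+4B$ is a clean way to say what the paper says by casework.

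The gap is precisely where you flag it. Your proposal stops at the identity
\[
\sum_{j\ge 1,\ j^{2}<N}a(N-j^{2})+a(M)\equiv\frac{\sigma(N)-d(N)}{4}\pmod 2,
\]
and neither of your two suggested attacks is carried out. The paper does not manage this step elementarily either: it rewrites the left side as counting representations $N=x^{2}+py^{2}$ (with $p$ prime, $s_{p}(y)$ even, and the parity constraints forcing $p=2$ or $p\equiv 5\pmod 8$), observes that this count is the $N$-th coefficient of $H(q)=F(q)G(q)+F(q^{4})F(q^{2})$ for the Eisenstein-type forms $F(q)=\sum\sigma_{1}(2n+1)q^{2n+1}$ and $G(q)=\sum\sigma_{1}(8n+5)q^{8n+5}$, and then invokes the Sturm bound: $H$ is weight $4$ on $\Gamma_{0}(64)$, so checking its first $32$ coefficients are even suffices. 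This modular-forms step is attributed to Rouse and is the crux of the argument.

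Your first speculated route (unwinding $a$ via prime factorizations and cancelling in pairs) is exactly the shifted-convolution problem the modular-forms machinery is designed to bypass; there is no indication an elementary argument exists. Your second route (a second involution commuting with conjugation, giving a free Klein-four action) is explicitly mentioned in the paper's remarks as something that \emph{would be interesting to find}, i.e., it is not known. So as written, the proposal is a correct reduction to the hard lemma but not a proof; the missing ingredient is the Sturm-bound verification (or an as-yet-undiscovered combinatorial substitute).
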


\begin{proof} For the crucial lemma to complete this proof the author cordially thanks Jeremy Rouse of Wake Forest University, who provided the modular-forms argument in response to a question on MathOverflow \cite{MOJRouse}.

MacMahon, Andrews, and Dilcher all derived the identity

$$\nu_2(n) = \frac{1}{2}\left(\sum_{k=1}^{n-1} d(k)d(n-k) - \sigma_1(n) + d(n) \right)$$

\noindent where $\sigma_1(n)$ is the sum of the divisors of $n$.  They manipulated the generating function, but for self-containment we will prove it combinatorially.

Consider a "two-sided Ferrers diagram" built by separating $n$ into $k$ and $n-k$, both nonzero, and creating two rectangles, one of area $k$ and the other of area $n-k$.  Clearly the number of such diagrams is $\sum_{k=1}^{n-1} d(k)d(n-k)$.  When the two rectangles are of the same height $j$, which is a divisor of $n$, the resulting single rectangle can arise when $k$ is anything from 1 to $\frac{n}{j}-1$.

To eliminate copies, we subtract $\frac{n}{j}$ such enumerated pairs, and as we range over all $j$ dividing $n$ this totals $\sigma_1(n)$ subtractions.  But this is 1 too many, so we re-add one for each such $j$, i.e. $d(n)$.  Finally, from the resulting enumeration of pairs of different heights, we take the half of such pairs where the smaller is on the right, and obtain correct Ferrers diagrams of partitions into two sizes of parts.

We now show $$m=16n+14 \Longrightarrow \sigma_1(m) \equiv 0 \pmod{8}.$$  We have $m=2(8n+7)$, and $8n+7$ has exactly one of "the exponents of primes 7 mod 8 in its factorization sum to an odd number," or "the exponents of primes 3 and 5 mod 8 in its factorization both sum to odd numbers."  If the former is the case, let $p \equiv 7 \pmod{8}$ be such a prime.  Then consider factors $x$, where $x$ is a factor of $m$ not divisible by 2 and in which $p$ appears to even order (possibly 0).  Group these with $x$, $2x$, $px$, and $2px$, summing to $24x$ modulo 8.  In the latter case, designate suitable primes 3 and 5 mod 8, and the grouping is $x$, $2x$, $3x$, $5x$, $6x$, $10x$, $15x$, and $30x$, summing to $72x$.

Thus for $m \equiv 14 \pmod{16}$, $$\nu_2(m) \equiv \frac{1}{2}\left(\sum_{k=1}^{m-1} d(k)d(m-k) + d(m) \right) \pmod{4},$$

\noindent and we observe that for $m \equiv 6 \pmod{8}$, both $d(m)$ and $\sum_{k=1}^{m-1} d(k)d(m-k)$ are divisible by 4: the former since two primes (2 and one which is 3 mod 4) appear to odd order in the factorization of $m$, and the latter since $m$ is neither a square nor the sum of two squares, so at least one of $d(k)$ and $d(m-k)$ is even, and each product appears twice in the sum, except the middle term $d\left( \frac{m}{2} \right)^2$, which is a multiple of 4.

Thus to prove the theorem it suffices to show the lemma:

\begin{lemma}\label{RouseLemma} (Rouse) For $m \equiv 6 \pmod{8}$, $d(m) \equiv \sum_{k=1}^{m-1} d(k)d(m-k) \pmod{8}.$
\end{lemma}

It follows from the description above that $$\sum_{k=1}^{m-1} d(k) d(m-k) = 2 \sum_{k=1}^{\frac{m-2}{2}} d(k) d(m-k) + d \left( \frac{m}{2} \right)^2.$$

Whem $m \equiv 6 \pmod{8}$, $m = 2(4j+3)$, and the prime factorization of $(4j+3)$ contains an odd number of primes which are 3 mod 4.  Depending on the parity of all primes in its factorization, $d(4j+3)$ is either 2 or 4 mod 4, and thus $$d(m) \equiv d\left( \frac{m}{2} \right)^2 \pmod{8}.$$

Thus we may subtract these from both sides of the claimed identity, and it suffices to prove

$$\sum_{k=1}^{\frac{m-2}{2}} d(k) d(m-k) \equiv 0 \pmod{4}.$$

There are no odd terms, since $m$ is not the sum of two squares, and therefore we wish to show that there are an even number of terms that are not multiples of 4.

The only terms that are not multiples of 4 are those in which $k$ or $m-k$ is square, and the other term is 2 mod 4, i.e. $m-k$ or $k$ respectively is $p y^2$ for $p$ a prime, with $s_p(y) \equiv 0 \pmod{2}$.  Since $m \equiv 6 \pmod{8}$, when $m=x^2 + p y^2$ either $x$ is even, $p=2$ and $y$ is odd, or $x$ is odd, $y$ is odd and $p \equiv 5 \pmod{8}$ (since $x^2$ and $y^2$ are both 1 mod 8).

We note that the latter conditions are both necessary and sufficient for such an $n = py^2 \equiv 5 \pmod{8}$ to have $\sigma_1(n) \equiv 2 \pmod{4}$.  Otherwise, $\sigma_1(n) \equiv 0 \pmod{4}$.  That is,

$$\frac{1}{2} \sum_{n=0}^\infty \sigma_1(8n+5) q^{8n+5} \equiv \sum_{{p \equiv 5 \pmod{8}} \atop {y \geq 1, 2 \vert s_p(y)}} q^{p y^2} \pmod{2}.$$

We have for all odd numbers,

$$F(q) := \sum_{n=0}^\infty \sigma_1(2n+1) q^{2n+1} \equiv \sum_{n =1}^\infty q^{(2n+1)^2} \pmod{2}.$$

The remainder of the proof employs the properties of modular forms.  We will not include all the definitions here, but will summarize the useful facts:

\begin{itemize}
\item A modular form is said to be of weight $k$ for $\Gamma_0(N)$, a certain subgroup of $\mathfrak{sl}_2$. Its \emph{level} is the minimum possible $N$.  Such a form is also of weight $k$ for any $\Gamma_0(cN)$, $c \in \mathbb{N}$.
\item Modular forms of a given weight for $\Gamma_0(N)$ form a vector space over $\mathbb{C}$, and the substitutions $q \rightarrow q^c$ for $c \in \mathbb{N}$ send forms of weight $k$ for $\Gamma_0(N)$ to forms of weight $k$ for $\Gamma_0(cN)$.
\item The product of two modular forms for $\Gamma_0(N)$ of weights $k$ and $\ell$ is a modular form for $\Gamma_0(N)$ of weight $k+\ell$.
\item Given the weight and level of a form $f(q)$, there is a $c$ depending on these known as the \emph{Sturm bound} for which, if all coefficients of $q^i$ in $f(q)$ with $i \leq c$ are divisible by a prime $p$, then all coefficients of $f$ are so divisible.
\end{itemize}

The two modular forms necessary for our proof are

$$F(q) = \sum_{n=0}^\infty \sigma_1(2n+1) q^{2n+1} \quad \text{ and } \quad G(q) := \sum_{n=0}^\infty \sigma_1(8n+5) q^{8n+5}$$

\noindent where $F(q)$ has weight 2 on $\Gamma_0(4)$, and $G(q)$ is a modular form of weight 2 on $\Gamma_0(64)$.  Thus $H(q) = F(q)G(q) + F(q^4)F(q^2)$ is a modular form of weight 4 on $\Gamma_0(64)$.  Letting $r_{1,p}(n)$ denote the number of representations of $n$ of the form $x^2+py^2$, $s_p(y) \equiv 0 \pmod{2}$, we have

\begin{multline}
H(q) \equiv \sum_{{x,y \geq 1 \text{ odd},} \atop {p \equiv 5 \pmod{8} \text{ prime}\, 2 \vert s_p(y)}} q^{x^2 + py^2} + \sum_{{x,y \geq 1, \, x \equiv 2 \pmod{4}} \atop {y \equiv 1 \pmod{2}}}q^{x^2 + 2y^2} \pmod{2} \\
= \sum_{n=0}^\infty r_{1,p} (8n+6) q^n.
\end{multline}

The Sturm bound is 32, and calculation gives us that the coefficients of $H(q)$ up to $q^{32}$ are indeed even: hence all are.

Since the number of representations of $8n+6$ of the form $x^2 + py^2$, $s_p(y) \equiv 0 \pmod{2}$ is even, there is an even number of terms 2 mod 4 in $\sum_{n=1}^{\frac{m-2}{2}} d(k) d(m-k)$, and so the sum is 0 mod 4.  This concludes the proof.
\end{proof}

The corollaries of these theorems for $c_{k,1}$ for $k$ even are several.  Among others, we have:

\begin{corollary} 
\begin{align*}c_{2k,1} (8n+6) &\equiv 0 \pmod{8} \\
c_{2k,1} (16n+14) &\equiv 2k d(16n+14) + 8 k^3 \nu_3 (16n+14) \pmod{16} \\
c_{4k,1} (16n+14) &\equiv 0 \pmod{16} \\
\nu_3(16n+14) &\equiv \frac{1}{4}d(16n+14) \pmod{2} \\
c_{2k,1} (p(pn+r)) &\equiv 0 \pmod{8} \, (r \, \text{not a quadratic residue mod } \, p) \\
\end{align*}
\end{corollary}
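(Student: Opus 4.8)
The plan is to deduce every line from the identity $c_{k,1}(n)=\sum_{i\ge 1}k^{i}\nu_{i}(n)$ (clause (3) of Theorem~\ref{KOneFn}) by reducing modulo the appropriate power of $2$, so that only finitely many $\nu_{i}$ survive, and then inserting the $2$-adic information about $\nu_{1}=d$ and $\nu_{2}$ furnished by the preceding theorems. Two elementary facts are used repeatedly: if $N$ is not a perfect square then $d(N)$ is even, and if $N$ is odd then $d(2N)=2d(N)$; thus $d(8n+6)=2d(4n+3)$ and $d(16n+14)=2d(8n+7)$, and in each case the odd factor ($4n+3\equiv 3\bmod 4$, respectively $8n+7\equiv 7\bmod 8$) is a non-square, so $4\mid d(8n+6)$ and $4\mid d(16n+14)$.

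For the first line, reducing $c_{2k,1}(8n+6)=\sum_{i}(2k)^{i}\nu_{i}(8n+6)$ modulo $8$ annihilates every term with $i\ge 3$; the surviving term $2k\,d(8n+6)$ is $\equiv 0\pmod 8$ since $4\mid d(8n+6)$, and $4k^{2}\nu_{2}(8n+6)\equiv 0\pmod 8$ since $8n+6$ has the form $4m+2$, so $\nu_{2}(8n+6)$ is even by Theorem~\ref{V2Mod2}. For the second and third lines the same bookkeeping is carried out modulo $16$, using the theorem $\nu_{2}(16n+14)\equiv 0\pmod 4$: modulo $16$ only $i\le 3$ survive in $c_{2k,1}$, the $i=2$ term $4k^{2}\nu_{2}(16n+14)$ is $\equiv 0\pmod{16}$, and one is left with $c_{2k,1}(16n+14)\equiv 2k\,d(16n+14)+8k^{3}\nu_{3}(16n+14)\pmod{16}$; for $c_{4k,1}$ only the $i=1$ term survives modulo $16$, and $4k\,d(16n+14)\equiv 0\pmod{16}$ because $4\mid d(16n+14)$.

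For the last line, take $p$ an odd prime and $r$ a non-residue mod $p$. Then $p\nmid r$, so $p$ occurs to the first power in $m=p(pn+r)$; moreover $pn+r$ is not a perfect square (else $r$ would be a square mod $p$), hence $pn+r$ contains some prime $q\neq p$ to an odd power, so $m$ has two primes to odd order and $\nu_{2}(m)\equiv 0\pmod 2$ by the theorem that $d(m)\equiv 0\pmod 4$ implies this; likewise $d(m)=2d(pn+r)$ with $d(pn+r)$ even, so $4\mid d(m)$. Reducing $c_{2k,1}(m)$ modulo $8$ then leaves $2k\,d(m)+4k^{2}\nu_{2}(m)\equiv 0\pmod 8$. (This is the $c_{2k,1}$ shadow of the $\nu_{2}$ statement for $p(pn+r)$, which recovers the progressions $9n+6=3(3n+2)$ and $25n+10=5(5n+2)$.)

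The fourth line is the one that does not come from the paper's own $\nu_{i}$-congruences, and is where I expect the main difficulty. Given that $4\mid d(16n+14)$ and $\nu_{2}(16n+14)\equiv 0\pmod 4$, it is equivalent to the overpartition congruence $\overline{p}(16n+14)\equiv 0\pmod{16}$: from $\overline{p}(m)=c_{2,1}(m)\equiv 2\nu_{1}(m)+4\nu_{2}(m)+8\nu_{3}(m)\pmod{16}$ the $4\nu_{2}$ term drops, leaving $8\nu_{3}(m)\equiv \overline{p}(m)-2d(m)\pmod{16}$, so $\nu_{3}(m)\equiv \frac{1}{4}d(m)\pmod 2$ holds exactly when $\overline{p}(16n+14)\equiv 0\pmod{16}$. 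The plan is therefore to supply this last congruence; it is not implied by Kim's mod-$8$ result, so it must either be cited from the overpartition literature or, in the spirit of Lemma~\ref{RouseLemma}, proved by identifying the relevant generating function with a weight-two eta-quotient on some $\Gamma_{0}(N)$ and checking divisibility up to the Sturm bound. Combining this input with the elementary reductions above then yields all five congruences.
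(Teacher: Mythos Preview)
Your proposal is correct and follows the paper's approach closely. Two minor differences are worth noting. For the first line, you invoke Theorem~\ref{V2Mod2} directly (since $8n+6\equiv 2\pmod 4$ gives $\nu_2(8n+6)$ even), which is actually cleaner than the paper's argument; the paper instead goes through the divisor-sum identity for $\nu_2$ and the fact that $\sigma_1(8n+6)\equiv 0\pmod 4$. For the fourth line, you correctly reduce to the overpartition congruence $\overline{p}(16n+14)\equiv 0\pmod{16}$ and propose either citing it or proving it via a Sturm-bound computation; the paper takes the first option and cites \cite{CHSZ}, so your plan matches. The remaining lines are argued identically.
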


(We note that the congruences for $c_{2,1}$ have independently appeared in preprint during the writing of this paper, proved by dissection of generating functions: \cite{CHSZ}.  They also establish $c_{2,1}(16n+14) \equiv 0 \pmod{16}$, which with this technique would require information on $\nu_3$, and other congruences for overpartitions modulo powers of 2.)

\begin{proof}
From the previous argument, $\nu_2(8n+6) \equiv \frac{1}{2} \sigma_1(8n+6) \pmod{4}$.  Hence, at worst if $k$ is odd, $$c_{2k,1} (8n+6) \equiv 2 d(8n+6) + 2 \sigma_1(8n+6) \pmod{8}.$$

But $d(8n+6) \equiv 0 \pmod{4}$ and $\sigma_1(8n+6) \equiv 0 \pmod{4}$ by grouping factors $x$, $2x$, $3x$, and $6x$ similar to the earlier argument.  Hence $c_{2k,1} (8n+6) \equiv 0 \pmod{8}$.

For the second clause, we have $$c_{2k,1} (16n+14) \equiv 2k d(16n+14) + 4k^2 \nu_2(16n+14) + 8k^3 \nu_3(16n+14) \pmod{16}.$$

Removing $\nu_2(16n+14) \equiv 0 \pmod{4}$ by the theorem, have the remaining terms.  The third clause immediately follows.

The fourth clause follows from $$c_{2,1} (16n+14) \equiv 0 \pmod{16},$$

\noindent which was established in \cite{CHSZ}.

The last clause follows from Theorem \ref{V2Mod2}.
\end{proof}

\noindent \textbf{Remarks:} There are several interesting points about the proof of Lemma \ref{RouseLemma}.  We have used the technology of modular forms to prove a congruence for a function which is not, itself, a modular form.  The entry of representations by sums of squares echoes the appearance of this topic in overpartition theory (see for instance \cite{Kim}), so there may be further unexplored connections between these representations and $\nu_2$, and possibly between $\nu_i$ and other forms.

Since $16n+14 \equiv 2 \pmod{4}$, it has no self-conjugate partitions into two sizes of part, and the congruence mod 4 could be proven more directly by finding an involution on conjugate pairs with no fixed points.  It would probably be interesting to find this.

It also appears to be the case that 

\begin{conjecture} $\nu_2(36n+30) \equiv 0 \pmod{4}$ and $\nu_3(36n+30) \equiv 0 \pmod{2}$.
\end{conjecture}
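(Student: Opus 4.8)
We sketch a plan of attack, treating the two assertions separately and following the templates already developed in this section.

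For $\nu_2(36n+30)\equiv 0\pmod 4$, the plan is to reduce to a representation count, as in the proof of Lemma \ref{RouseLemma}. Write $m=36n+30=6(6n+5)$. Since $6n+5$ is coprime to $6$ and, being $\equiv 5\pmod 6$, is never a perfect square, we have $d(m)=d(2)d(3)d(6n+5)=4d(6n+5)\equiv 0\pmod 8$, while $\sigma_1(m)=12\,\sigma_1(6n+5)\equiv 0\pmod 8$ because $\sigma_1(6n+5)$ is even; moreover $m/2=3(6n+5)$ gives $d(m/2)^2=4d(6n+5)^2\equiv 0\pmod{16}$. Feeding this into the MacMahon--Andrews--Dilcher identity $\nu_2(m)=\tfrac12\bigl(\sum_{k=1}^{m-1}d(k)d(m-k)-\sigma_1(m)+d(m)\bigr)$ collapses it, using $\sum_{k=1}^{m-1}d(k)d(m-k)=2\sum_{k=1}^{(m-2)/2}d(k)d(m-k)+d(m/2)^2$, to $\nu_2(m)\equiv\sum_{k=1}^{(m-2)/2}d(k)d(m-k)\pmod 4$. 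Since $3$ divides $m$ exactly once and $3\equiv 3\pmod 4$, $m$ is not a sum of two squares, so no summand is odd; a summand is $\equiv 2\pmod 4$ exactly when one of $k,m-k$ is a square and the other is $py^2$ with $p$ prime and $s_p(y)$ even, and this correspondence is a bijection with
\[
R(m):=\#\{(x,p,y): m=x^2+py^2,\ p\text{ prime},\ x,y\ge 1,\ s_p(y)\equiv 0\ (\mathrm{mod}\ 2)\},
\]
so that $\nu_2(m)\equiv 2R(m)\pmod 4$. It therefore suffices to show $R(36n+30)$ is even.

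The second step is the modular-forms step. The constraint $m\equiv 2\pmod 4$ forces, in any such representation, either $p=2$ with $x$ even and $y$ odd, or $p\equiv 1\pmod 4$ with $x,y$ odd; the extra constraint $m\equiv 3\pmod 9$ (which, together with $m\equiv 2\pmod 4$, is equivalent to $m\equiv 30\pmod{36}$) further restricts $p$ and the $3$-adic valuations of $x$ and $y$. I would encode $R$ modulo $2$ as a coefficient of a product of modular forms built from $F(q)=\sum_{n\ge 0}\sigma_1(2n+1)q^{2n+1}$ (weight $2$ on $\Gamma_0(4)$, with $F(q)\equiv\sum_{n\ge 0}q^{(2n+1)^2}\pmod 2$) and its dilates $F(q^c)$ to carry the $x^2$ factors, together with the Eisenstein pieces $\tfrac12\sum_n\sigma_1(8n+r)q^{8n+r}$ used in Lemma \ref{RouseLemma}, which modulo $2$ enumerate precisely the admissible $py^2$; isolating the progression $30\pmod{36}$ additionally requires a theta- or Eisenstein-type factor detecting $s_3(m)$, which raises the level to a common multiple of a power of $2$ and a power of $3$. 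The resulting form $H(q)$ then has a computable weight and level, and by the Sturm bound the evenness of $R(36n+30)$ reduces to a finite coefficient check.

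For $\nu_3(36n+30)\equiv 0\pmod 2$ there are three natural routes, in increasing order of self-containedness. \emph{(a) Combinatorial:} since the number of part sizes is invariant under conjugation, $\nu_3(m)$ has the parity of the set of self-conjugate partitions of $m$ into exactly three part sizes; such a partition is a symmetric nest of three $L$-shapes and is governed by a ternary Diophantine equation generalizing the relation $m=a^2+2ab=(a+b)^2-b^2$ used for two part sizes, and one would show its solution count is even for $m=36n+30$, ideally by exhibiting a fixed-point-free involution. \emph{(b) Algebraic:} derive, in the style of \cite{GEA1}, an exact formula for $\nu_3(n)$ in terms of triple divisor convolutions $\sum_{a+b+c=n}d(a)d(b)d(c)$ and lower-order terms, reduce modulo $2$, and run it through the machinery of the previous paragraph. \emph{(c) Via overpartitions:} if $\overline p(36n+30)=c_{2,1}(36n+30)\equiv 0\pmod{16}$ can be established (the methods of \cite{CHSZ} produce exactly such congruences), then combining it with the first part of this plan and the expansion $c_{2,1}(N)\equiv 2d(N)+4\nu_2(N)+8\nu_3(N)\pmod{16}$ gives $8\nu_3(m)\equiv -2d(m)\equiv -8d(6n+5)\equiv 0\pmod{16}$, since $d(6n+5)$ is even; that is the assertion.

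The main obstacle is the first step of the modular-forms argument: unlike $14\pmod{16}$, the progression $30\pmod{36}$ is not a single residue class modulo a prime power, so the $2$-adic and $3$-adic structure must be handled simultaneously, pushing up the level (hence the Sturm bound, hence the size of the required finite verification) and forcing one to check modularity of the Eisenstein/theta building blocks at that larger level. For the $\nu_3$ statement the crux is that no convenient exact description modulo $2$ is yet in hand --- whether a clean Diophantine description of self-conjugate partitions into three part sizes together with a matching involution, or a tractable triple-convolution identity --- so route (c), contingent on an overpartition congruence modulo $16$, is likely the most direct.
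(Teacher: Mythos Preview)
The paper does not prove this statement --- it is listed as a \emph{conjecture}, with only a brief remark sketching the anticipated approach.  So there is no proof to compare against, only the author's hints: that the $\nu_2$ case ``involves more cases for the residues of $x$ and $y$ mod 36 in $x^2+py^2$'' but ``does not otherwise seem to be any deeper in technique,'' and that the $\nu_3$ case ``would require information on $\sigma_2(n)$ and therefore a different approach.''

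For $\nu_2(36n+30)$, your reduction is sound and tracks the paper's template exactly.  The arithmetic checks ($d(m)=4d(6n+5)\equiv 0\pmod 8$ since $6n+5$ is never a square; $\sigma_1(m)=12\sigma_1(6n+5)\equiv 0\pmod 8$ since $6n+5$ is odd and not a square; $d(m/2)^2\equiv 0\pmod{16}$; $m$ not a sum of two squares since $3$ appears to the first power) are all correct, and the reduction to the parity of $R(m)$ is the right analogue of Lemma~\ref{RouseLemma}.  The modular-forms step you outline --- building $H(q)$ from $F$ and suitable dilates, raising the level to accommodate both $2$-adic and $3$-adic constraints, and invoking Sturm --- is precisely the case analysis the paper's remark anticipates but does not execute.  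Neither you nor the paper carries out the finite verification, so both remain sketches at the same point.

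For $\nu_3(36n+30)$, your three routes depart from the paper's hint.  The author points to $\sigma_2(n)$, which would surface in an Andrews--Dilcher-style exact formula for $\nu_3$; your route (b) is the one that would naturally reproduce this, though you do not mention $\sigma_2$ explicitly.  Route (a) via self-conjugate partitions into three part sizes is a reasonable combinatorial alternative the paper does not consider.  Route (c) is logically correct as stated --- your deduction from $\overline{p}(36n+30)\equiv 0\pmod{16}$ together with the first part and $d(6n+5)$ even is valid --- but it is conditional on an overpartition congruence that neither the paper nor \cite{CHSZ} supplies, so it relocates rather than resolves the difficulty.

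In short: your proposal is an honest plan of attack, appropriate given that the paper itself offers none.  The $\nu_2$ portion matches the author's intended method; the $\nu_3$ portion offers alternatives, of which only route (b) aligns with the paper's $\sigma_2$ hint, while route (c) is clean but contingent.
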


\noindent \textbf{Remark:} The proof for $\nu_2(36n+30)$ involves more cases for the residues of $x$ and $y$ mod 36 in $x^2+py^2$ than for $\nu_2(16n+14)$, but does not otherwise seem to be any deeper in technique, and we hope to produce it shortly.  Proofs for $\nu_3$ would require information on $\sigma_2(n)$ and therefore a different approach.

\section{$(k,k-1)$-colored partitions}\label{KMinus1}

Another special case of considerable interest is the case of $(k,k-1)$-colored partitions.  This may be considered a different generalization of overpartitions, which are the $k=2$ case. Their generating function is

\begin{multline}C_{k,k-1}(q) = \prod_{n=1}^\infty \frac{\left( \sum_{i=0}^k (1-q^n)^{k-i} \binom{k}{i} q^{in} \right) (1-q^n)^{-1} - \frac{q^{kn}}{1-q^n}}{(1-q^n)^{k-1}} \\
 = \prod_{n=1}^\infty \frac{(q^n - (1-q^n))^k - q^{kn}}{(1-q^n)^k} = \prod_{n=1}^\infty \frac{1-q^{kn}}{(1-q^n)^k}.\end{multline}
 
This is an $\eta$-quotient, i.e. a quotient of products of functions $${f_t} := \prod_{n=1}^\infty (1-q^{tn}).$$ By a well-known theorem of Gordon, Hughes and Newman (\cite{GH}, \cite{New}) on $\eta$-quotients, it is always a modular form of negative weight $\frac{1}{2}(1-k)$ and level $24k$ (or any level this divides).   Together these two facts promise us the existence of many congruences and give us numerous tools in the literature to prove them.

Here is a small sample of available congruences:

\begin{theorem} 
\begin{align*}
c_{3,2}(3n+2) &\equiv 0 \pmod{9} \\
c_{3,2}(4n+2) &\equiv 0 \pmod{9} \\ 
c_{5,4}(4n+2) &\equiv 0 \pmod{20} \\
c_{5,4}(2n) &\equiv 0 \pmod{10} \, \text{for } \, n \neq 0
\end{align*}
\end{theorem}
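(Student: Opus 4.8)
The plan hinges on the $\eta$-quotient $C_{k,k-1}(q)=f_k/f_1^{\,k}$ from the display above, together with elementary facts about powers of $(q)_\infty$ modulo small prime powers. For a prime $p$ one has $(1-q)^{p}\equiv 1-q^{p}\pmod p$ and, refining, $(1-q)^{p^{2}}\equiv(1-q^{p})^{p}\pmod{p^{2}}$ (the intermediate binomial coefficients carry the needed powers of $p$); taking products, $f_1^{\,k}\equiv f_k\pmod k$ and $f_1^{\,k^{2}}\equiv f_k^{\,k}\pmod{k^{2}}$ for $k$ prime. The first already gives $C_{k,k-1}(q)\equiv 1\pmod k$, hence $c_{k,k-1}(n)\equiv 0\pmod k$ for all $n\ge 1$; this accounts for the factor $3$ in the modulus $9$ and the factor $5$ in the moduli $20$ and $10$. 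What remains in each line is to exhibit one more factor of $3$ (for $c_{3,2}$), resp.\ of $2$ and of $4$ (for $c_{5,4}$), on the stated arithmetic progression, and for that one uses the refinements together with the classical $2$- and $3$-dissections of $f_1^{3}$ and of the allied theta functions.

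I would handle $c_{3,2}(3n+2)\equiv 0\pmod 9$ directly. From $f_1^{9}\equiv f_3^{3}\pmod 9$ we get $C_{3,2}(q)=f_3f_1^{6}/f_1^{9}\equiv f_1^{6}/f_3^{2}\pmod 9$. In Jacobi's identity $f_1^{3}=\sum_{n\ge 0}(-1)^{n}(2n+1)q^{n(n+1)/2}$ the exponent $n(n+1)/2$ runs through the residues $0,1,0\pmod 3$ as $n\equiv 0,1,2\pmod 3$, and when $n\equiv 1\pmod 3$ the coefficient $2n+1$ is a multiple of $3$; hence $f_1^{3}=\alpha(q^{3})+3q\,\beta(q^{3})$ with $\alpha,\beta\in\mathbb Z[[q]]$. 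Squaring, \[ f_1^{6}=\alpha(q^{3})^{2}+6q\,\alpha(q^{3})\beta(q^{3})+9q^{2}\beta(q^{3})^{2}, \] so the part of $f_1^{6}$ supported on exponents $\equiv 2\pmod 3$ is exactly $9q^{2}\beta(q^{3})^{2}$, divisible by $9$. Since $f_3^{-2}$ is a power series in $q^{3}$, multiplication by it preserves residue classes of exponents modulo $3$, so the coefficients of $q^{3n+2}$ in $C_{3,2}(q)$ are $\equiv 0\pmod 9$.

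The three remaining congruences are cut out modulo $2$ or $4$ rather than modulo $k$, and here the correcting $\eta$-factors — $f_3^{-2}$ for $c_{3,2}$, $f_2^{-2}$ for $c_{5,4}$ — have exponents in \emph{every} residue class modulo $4$, so a single dissection no longer displays the extra prime and one must iterate. For $c_{3,2}(4n+2)$ the clean starting point is again $C_{3,2}\equiv f_1^{6}/f_3^{2}\pmod 9$ (or, alternatively, the $3$-dissection $f_1^{3}=a(q^{3})f_3-3q f_9^{3}$ with $a(q)=\sum_{m,n\in\mathbb Z}q^{m^{2}+mn+n^{2}}\equiv 1\pmod 3$, giving $C_{3,2}(q)\equiv a(q^{3})^{-1}+3q\,f_9^{3}a(q^{3})^{-2}f_3^{-1}\pmod 9$); one then combines the full $4$-dissection of the numerator with that of $f_3^{-2}$, and the $q^{4n+2}$ component collapses, after elementary bookkeeping, to a divisor-sum congruence of the shape $\sum_{d\mid N}\left(\tfrac d3\right)\equiv 0\pmod 3$ on an explicit progression — a sum that vanishes identically once $N$ is constrained as here. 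For $c_{5,4}$, by the first paragraph only powers of $2$ remain; modulo $2$ one has $f_1^{2}\equiv f_2$, $f_1^{4}\equiv f_4$, $f_1^{8}\equiv f_8$, whence $C_{5,4}(q)\equiv f_5/(f_1f_4)\equiv f_1f_2f_5/f_8\pmod 2$, and since $f_8^{-1}$ is a power series in $q^{8}$ the even part of $C_{5,4}$ equals $f_8^{-1}$ times the even part of $f_1f_2f_5\equiv\psi(q)f_5\pmod 2$, where $\psi(q)=\sum_{n\ge0}q^{n(n+1)/2}$; a $2$-dissection identifies this even part as $\equiv f_8\pmod 2$, which is $c_{5,4}(2n)\equiv 0\pmod{10}$, while for $c_{5,4}(4n+2)\equiv 0\pmod{20}$ one uses in addition $f_1^{4}\equiv f_2^{2}\pmod 4$, so $C_{5,4}\equiv f_5/(f_1f_2^{2})\pmod 4$, and a two-step $2$-dissection extracts the extra factor $4$ off the progression $\{4n+2\}$.

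The uniform alternative is the modular-forms route set up just before the theorem: $C_{k,k-1}(q)$ is a weakly holomorphic modular form of weight $\tfrac12(1-k)$ on $\Gamma_0(24k)$, so multiplying by a suitable power of $\eta$ gives a holomorphic form of positive weight; restricting coefficients to $\{An+B\}$ (a quadratic twist followed by $U_{|A|}$, with the attendant level change) lands in a finite-dimensional space of explicit weight and level, and Sturm's bound turns each congruence into a finite check, which combined with the $\pmod k$ reduction of the first paragraph closes all four cases. I expect the real obstacle to be the same in either approach: after the ``free'' factor $k$ has been extracted from $f_1^{\,k}\equiv f_k$, producing the \emph{next} power of $3$ or $2$ on a progression governed by a modulus coprime to $k$ forces one either to unwind two nested dissections — during which the correcting $\eta$-factors spread coefficient mass over all residue classes — or, on the modular-forms side, to keep the level after the $U$-operator small enough that the Sturm bound stays feasible. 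I would write up $c_{3,2}(3n+2)$ by the dissection above and the other three through the modular-forms computation.
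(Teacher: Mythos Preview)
Your argument for $c_{3,2}(3n+2)\equiv 0\pmod 9$ is complete and correct, and it differs from the paper's. The paper argues combinatorially (color rotation) that $c_{3,2}(m)\equiv 3\sigma_1(m)\pmod 9$ and then checks $3\mid\sigma_1(3n+2)$; you instead use $f_1^{9}\equiv f_3^{3}\pmod 9$ together with the $3$-adic shape of Jacobi's cube, $f_1^{3}=\alpha(q^{3})+3q\,\beta(q^{3})$, so that squaring forces the $q^{3n+2}$ part of $f_1^6/f_3^2$ to carry a factor $9$. Both routes are short; yours avoids divisor sums entirely and transfers more readily to other $\eta$-quotients.

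The remaining three congruences, however, are not proved in your write-up. For $c_{3,2}(4n+2)$ you assert that a nested $4$-dissection ``collapses, after elementary bookkeeping, to a divisor-sum congruence'' which then ``vanishes identically,'' but neither the dissection nor the divisor identity is exhibited or checked. For $c_{5,4}$ you reduce correctly to $\psi(q)f_{5}/f_{8}\pmod 2$ and to $f_{5}/(f_{1}f_{2}^{2})\pmod 4$, but the decisive steps --- ``a $2$-dissection identifies this even part as $\equiv f_{8}$'' and ``a two-step $2$-dissection extracts the extra factor $4$'' --- are stated without computation. The modular-forms alternative you close with is a legitimate strategy, but it requires naming the holomorphic form, tracking the level after the twist and $U$-operator, and verifying coefficients up to the Sturm bound; none of this is carried out. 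As written, three of the four claims rest on the assertion that a computation \emph{would} succeed, not on a computation.

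The paper's treatment of these three is more direct: it pulls explicit $2$-dissection identities from the literature. For $c_{3,2}(4n+2)$ it quotes the Xia--Yao identity
\[
\frac{f_{3}}{f_{1}^{3}}=\frac{f_{4}^{6}f_{6}^{3}}{f_{2}^{9}f_{12}^{2}}+3q\,\frac{f_{4}^{2}f_{6}f_{12}^{2}}{f_{2}^{7}},
\]
which after $f_{2}^{9}\equiv f_{6}^{3}\pmod 9$ becomes a function of $q^{4}$ plus a piece supported on odd exponents, so the $q^{4n+2}$ coefficients vanish mod $9$ immediately. For $c_{5,4}$ it factors $f_{5}/f_{1}^{5}=(f_{5}/f_{1})\cdot f_{1}^{-4}$, applies known $2$-dissections of each factor, and reduces mod $4$ via $f_{a}^{4}\equiv_{4} f_{2a}^{2}$ to a function of $q^{4}$ plus odd-exponent terms, giving the $\pmod{20}$ line; a further mod-$2$ reduction of that $q^4$-series then yields the $\pmod{10}$ line. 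If you want to rescue your dissection route, the missing ingredient in each case is exactly such an explicit identity --- without one on the page, the ``iterated dissection'' is a plan, not a proof.
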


\begin{proof}

For $c_{3,2}(3n+2)$, we note that whether a part size has 1 or 2 colors, we may rotate the colors employed to obtain a group of size 3 for that part.  Thus the only terms that contribute mod 9 arise from partitions into 1 size of part, i.e. the divisors of $m$.

The partition $\lambda = m^1$ (in frequency notation) gives 3 partitions, and the partition $\lambda = d^{m/d}$ gives $3 + 3(\frac{m}{d}-1)$ partitions, arising from assigning each of the 3 colors once, plus dividing $\frac{m}{d}$ into two nonzero sections and assigning the three pairs of colors.  This gives a total of

$$c_{3,2}(m) \equiv \sum_{d \vert m} (3 + 3(\frac{m}{d}-1)) = 3 \sigma_1(m) \pmod{9}.$$

When $m=3n+2$, there is some prime $p_0$ in the factorization of $m$ which has $p_0 \equiv 2 \pmod{3}$ and which appears to odd exponent $\alpha_0$.  Then

$$\sigma_1(3n+2) = \frac{{p_0}^{\alpha_0+1}}{p_0-1} \cdot \sigma_1 \left( \frac{m}{{p_0}^{\alpha_0}} \right) \equiv_3 \frac{4^{(\alpha_0+1)/2}-1}{2-1} \cdot \sigma_1 \left( \frac{m}{{p_0}^{\alpha_0}} \right) \equiv 0 \pmod{3}.$$

So $c_{3,2} (3n+2) \equiv 0 \pmod{9}$.

For the remaining congruences, we employ several $\eta$-quotient dissection identities, some classical and some due to Xia and Yao \cite{XY}.

For $c_{3,2}(4n+2)$, 

$$\frac{f_3}{f_1^3} = \frac{f_4^6 f_6^3}{f_2^9 f_{12}^2} + 3q \frac{f_4^2 f_6 f_{12}^2}{f_2^7} \equiv_9 \frac{f_4^6}{f_{12}^2}+3q \frac{f_4^2 f_{12}^2}{f_2^4}.$$

The first equality can be found in \cite{XY}, and the congruence arises from the identity $(1-q)^9 \equiv_3 (1-q^3)^3$.  The latter term of the right-hand side gives only odd exponents in $q$, and the former gives only $q^{4n}$, so all terms $q^{4n+2}$ have coefficients that are 0 mod 9.

For $c_{5,4}$,

\begin{multline}
\frac{f_5}{{f_1}^5} = \frac{f_5}{f_1} \cdot \frac{1}{{f_1}^4} = \left( \frac{f_8 {f_{20}}^2}{{f_2}^2 f_{40}}+q \frac{{f_4}^3 f_{10} f_{40}}{{f_2}^3 f_8 f_{20}} \right) \cdot \left( \frac{{f_4}^{14}}{{f_2}^{14} {f_8}^4} + 4q \frac{{f_4}^2 {f_8}^4}{{f_2}^{10}} \right) \\
\equiv_4 \frac{{f_{20}}^2 {f_4}^{14}}{f_{40}{f_8}^3} \cdot \frac{1}{{f_2}^{16}} + (\text{terms of odd exponent}) \equiv_4  \frac{{f_{20}}^2 {f_4}^{6}}{f_{40}{f_8}^3} + \dots
\end{multline}

\noindent where in the last line we employed the identity ${f_a}^4 \equiv_4 {f_{2a}}^2$.

Hence all terms $q^{4n+2}$ have coefficient divisible by 4.  Since $1-q^{5n} \equiv_5 (1-q^n)^5$, all nonconstant terms are divisible by 5 (combinatorially, we can always rotate the colors of the smallest part size), so the congruence holds.  

Indeed, since $(1-q^2) \equiv_2 (1-q)^2$, every $q^{2t}$ term except for $t=0$ is even; everything else cancels mod 2, and thus the third congruence follows.

\end{proof}

Many other dissections may be employed to prove similar congruences.

\section{Results for general $j$}\label{GeneralJ}

For $j\not\in \{1, k-1\}$, there are other congruences which seem to arise mainly when the generating function shares a congruence with $k$-colored partitions or other classical generating functions.  We list a few of these here.

\begin{theorem}
\begin{align*}
c_{7,2} (5n+2,3,4) &\equiv 0 \pmod{35} \\
c_{7,2} (8n+4,6) &\equiv 0 \pmod{14}
\end{align*}
\end{theorem}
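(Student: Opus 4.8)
The plan is to treat the two moduli prime by prime, writing $35 = 5\cdot 7$ and $14 = 2\cdot 7$ and recombining with the Chinese Remainder Theorem. (The first line asserts $c_{7,2}(5n+r)\equiv 0\pmod{35}$ for $r\in\{2,3,4\}$, the second $c_{7,2}(8n+r)\equiv 0\pmod{14}$ for $r\in\{4,6\}$.) The prime $7$ is free in both lines. Specializing the product formula for $C_{k,j}(q)$ to $k=7$, $j=2$ gives $C_{7,2}(q)=\frac{1}{(q)_\infty^2}\prod_{n\ge1}\bigl((1-q^n)^2+7(1-q^n)q^n+21q^{2n}\bigr)\equiv\frac{(q)_\infty^2}{(q)_\infty^2}=1\pmod 7$, so $c_{7,2}(n)\equiv 0\pmod 7$ for every $n\ge1$. (Equivalently: cyclically permuting the one or two colours used on the smallest part of a nonempty $(7,2)$-coloured partition is a free $\mathbb{Z}/7\mathbb{Z}$-action, since $7\mid\binom71$ and $7\mid\binom72$.) Hence the second line is really a statement modulo $2$, its divisibility by $7$ being automatic, and in the first line it remains to prove $c_{7,2}(5n+r)\equiv 0\pmod 5$ for $r\in\{2,3,4\}$.

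For the prime $5$, the same product formula gives $C_{7,2}(q)=\frac{1}{(q)_\infty^2}\prod_{n\ge1}(1+5q^n+15q^{2n})\equiv\frac{1}{(q)_\infty^2}\pmod 5$. I would then write $\frac{1}{(q)_\infty^2}=\frac{(q)_\infty^3}{(q)_\infty^5}\equiv\frac{(q)_\infty^3}{(q^5)_\infty}\pmod 5$ (freshman's dream) and apply Jacobi's identity $(q)_\infty^3=\sum_{m\ge0}(-1)^m(2m+1)q^{m(m+1)/2}$: the terms with $m\equiv 2\pmod 5$ vanish mod $5$, and for the remaining residues one checks $m(m+1)/2\equiv 0$ or $1\pmod 5$. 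Since $1/(q^5)_\infty$ is a power series in $q^5$, it follows that $\frac{1}{(q)_\infty^2}$ — hence $C_{7,2}(q)$ mod $5$ — is supported on exponents $\equiv 0,1\pmod 5$, so $c_{7,2}(5n+r)\equiv 0\pmod 5$ for $r\in\{2,3,4\}$. With the mod-$7$ observation this completes the first line.

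For the prime $2$ I would first reduce $C_{7,2}(q)$ modulo $2$ to an $\eta$-quotient. From $C_{7,2}(q)=\prod_{n\ge1}\bigl(1+\frac{7q^n}{1-q^n}+\frac{21q^{2n}}{(1-q^n)^2}\bigr)$ and $1+\frac{q^n}{1-q^n}+\frac{q^{2n}}{(1-q^n)^2}=\frac{1-q^n+q^{2n}}{(1-q^n)^2}\equiv\frac{1-q^{3n}}{(1-q^n)^3}\pmod 2$ (using $(1-q^n)(1-q^n+q^{2n})\equiv 1-q^{3n}\pmod 2$), one gets $C_{7,2}(q)\equiv\frac{(q^3)_\infty}{(q)_\infty^3}\pmod 2$; since $(q)_\infty^4\equiv(q^4)_\infty\pmod 2$ this is also $\equiv\frac{(q)_\infty(q^3)_\infty}{(q^4)_\infty}=\frac{\eta(\tau)\eta(3\tau)}{\eta(4\tau)}\pmod 2$. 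It then remains to show this $\eta$-quotient has even coefficients of $q^{8n+4}$ and $q^{8n+6}$. I would do this either by a hands-on dissection — since $1/(q^4)_\infty$ is a series in $q^4$ it suffices to $2$-dissect $(q)_\infty(q^3)_\infty$ by splitting its generalized pentagonal exponents by parity (via the classical dissections of Euler's function), then assemble the components feeding residues $4$ and $6$ mod $8$ and verify they are even — or, in the spirit of Lemma \ref{RouseLemma}, by multiplying $\frac{\eta(\tau)\eta(3\tau)}{\eta(4\tau)}$ by suitable auxiliary $\eta$-products to land in a space of holomorphic (half-)integral-weight modular forms and checking the vanishing on the two progressions mod $2$ below the relevant Sturm bound. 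Either way, this last step — following the exponent residues mod $8$ through $\frac{(q^3)_\infty}{(q)_\infty^3}$ (numerically the odd part of this series appears to be $q$ times the series with $q$ replaced by $q^2$, mod $2$, so the relevant coefficients live entirely in its even part and a descent looks plausible) — is the main obstacle; by contrast the mod-$5$ and mod-$7$ parts are immediate reductions to classical facts.
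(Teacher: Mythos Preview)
Your treatment of the primes $7$ and $5$ is correct and matches the paper's argument almost exactly. The paper likewise observes $c_{7,2}(n)\equiv 0\pmod 7$ by colour rotation, and reduces modulo $5$ via $C_{7,2}(q)=\prod_{n\ge1}\frac{1+5q^n+15q^{2n}}{(1-q^n)^2}\equiv_5\frac{1}{(q)_\infty^2}$. Where the paper simply cites the known congruence $c_{2,2}(5n+j)\equiv 0\pmod 5$ for $j=2,3,4$, you supply the standard Jacobi-identity proof of that fact; this is a perfectly good (and slightly more self-contained) alternative.

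The gap is in the mod-$2$ part. You correctly reduce $C_{7,2}(q)\equiv_2 f_3/f_1^3$, but then stop at a plan: dissecting $f_1 f_3/f_4$ by pentagonal exponents, or invoking a Sturm bound. Neither is carried out, and the pentagonal-exponent route in particular is not obviously tractable (you would need to control residues mod $8$ of sums $m(3m-1)/2+3\ell(3\ell-1)/2$, which does not cleanly separate). The paper closes this gap with a concrete iterated $2$-dissection: it uses the exact identity
\[
\frac{f_3}{f_1^3}=\frac{f_4^6 f_6^3}{f_2^9 f_{12}^2}+3q\,\frac{f_4^2 f_6 f_{12}^2}{f_2^7},
\]
discards the odd part modulo $2$, rewrites the even part using $\frac{f_1^3}{f_3}=\frac{f_4^3}{f_{12}}-3q\,\frac{f_2^2 f_{12}^3}{f_4 f_6^2}$ and the same identity again with $q\mapsto q^2$, and then expands the resulting product $(B+q^4C)\,A\,(D+q^2E+q^4F+q^6G)$ in power series of $q^8$. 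The $q^{8n+4}$ contribution is $A(BF+CD)$ and the $q^{8n+6}$ contribution is $A(BG+CE)$; in each case the two summands are congruent modulo $2$ (via repeated use of $f_a^2\equiv_2 f_{2a}$), so their sum is even. That explicit computation is the missing step in your argument; supplying it (or a verified Sturm-bound check on the appropriate form) would complete your proof along the same lines as the paper.
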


\begin{proof}
We first note that $c_{7,2} \equiv 0 \pmod{7}$ by the usual argument.  Expanding the generating function, we find

$$C_{7,2} (q) = \prod_{n=1}^\infty \left( 1 + \frac{7 q^n}{1-q^n} + \frac{21q^{2n}}{(1-q^n)^2} \right) = \prod_{n=1}^\infty \frac{1+5q^n+15q^{2n}}{(1-q^n)^2} .$$

Thus $c_{7,2}(m) \equiv c_{2,2} (m) \pmod{5}$, i.e. the number of unrestricted 2-colored partitions of $m$.  It is known that $c_{2,2}(5n+j) \equiv 0 \pmod{5}$ for $j=2,3,4$ (for instance, square Hirschhorn's decomposition of the partition function mod 5), and first line follows.

\noindent \textbf{Remark:} Observe that when $k-j$ is a prime with $k-j > j$, it is always the case that $\binom{k-j+i-1}{i}$ will be divisible by $k-j$ for $i \neq 0$, since $i \leq j$.  Then $c_{k,j}$ will share a congruence modulo $k-j$ with the unrestricted $j$-colored partitions when one exists.

For the second line, we observe that 

$$\prod_{n=1}^\infty \frac{1+5q^n+15q^{2n}}{(1-q^n)^2} \equiv_2 \prod_{n=1}^\infty \frac{1+q^n+q^{2n}}{(1-q^n)^2} = \prod_{n=1}^\infty \frac{1-q^{3n}}{(1-q^n)^3}.$$

We now 2-dissect this generating function using identities due to various authors (collected in \cite{XiaYao}), especially:

$$\frac{f_3}{f_1^3} = \frac{f_4^6 f_6^3}{f_2^9 f_{12}^2} + 3q \frac{f_4^2 f_6 f_{12}^2}{f_2^7} \quad \text{ and } \quad \frac{f_1^3}{f_3} = \frac{f_4^3}{f_{12}} -3q \frac{f_2^2 f_{12}^3}{f_4 f_6^2}.$$

We dissect $C_{7,2}(q)$ thus:

\begin{multline*}
\frac{f_3}{f_1^3} = \frac{f_4^6 f_6^3}{f_2^9 f_{12}^2} + (\text{odd terms}) \equiv_2 \left( \frac{f_4^3}{f_{12}} \right)^2 \left( \frac{f_6}{f_2^3} \right)^3 + \cdots \\
\equiv_2 \left( \frac{f_{12}}{f_4^3} \right) \left( \frac{f_{16}^6}{f_{48}^2} + q^8 \frac{f_8^4 f_{48}^6}{f_{16}^2 f_{24}^4} \right) \left( \frac{f_8^6}{f_{24}^2} + q^2 f_8^3 f_{24} + q^4 f_{24}^4 + q^6 \frac{f_{24}^7}{f_8^3}  \right) \\
\equiv_2 \left( \frac{f_{16}^6 f_{24}^3}{f_8^9 f_{48}^2} + q^4 \frac{f_{16}^2 f_{24} f_{48}^2}{f_8^7} \right) \left( \frac{f_{16}^6}{f_{48}^2} + q^8 \frac{f_8^4 f_{48}^6}{f_{16}^2 f_{24}^4} \right) \left( \frac{f_8^6}{f_{24}^2} + q^2 f_8^3 f_{24} + q^4 f_{24}^4 + q^6 \frac{f_{24}^7}{f_8^3}  \right) \\
=: (B + q^4 C)(A)(D + q^2 E + q^4 F + q^6 G)
\end{multline*}

The last line names the individual terms for convenience; both terms in $A$ are strictly functions of $q^8$, so we combine them.

The terms of exponent $q^{8n+4}$ in the sum are $A(BF + CD)$.  By making repeated use of the congruence $f_1^2 \equiv_2 f_2$, we have $BF \equiv_2 CD$:

$$\frac{f_{16}^6 f_{24}^7}{f_8^9 f_{48}^2} \equiv_2 \frac{f_{16}^2 f_{24}^8}{f_8 f_{24} f_{48}^2} \equiv_2 \frac{f_{16}^2 f_{48}^2}{f_{24} f_8}.$$

Hence the sum $BF+CD \equiv_0 \pmod{2}$, and the congruence is established.

Similarly, the terms of exponent $q^{8n+6}$ arise from $A(BG+CE)$, and $BG \equiv_2 CE$, so both claims hold.
\end{proof}

\section{Truncations and the Hook Formula}\label{Truncations}

We conclude with further observations on the hooklength formula.

We showed in Section \ref{K1} that in $C_{1-b,1}(q)$, the coefficient on $q^n$ was exactly a restriction of the Han/Nekrasov-Okounkov hook length formula to considering only hooks of size 1.  Unfortunately, truncations at higher $j$ do not precisely match the hook length formula restricted to hooks of size $j$, but a combinatorially interesting relationship of some kind does seem plausible.  If we truncate at $j=2$, we can obtain a more limited relationship.  The generating function is:

\begin{multline*} C_{1-b,2} (q) = \prod_{n=1}^\infty \left( 1 + \frac{(1-b) q^n}{1-q^n} + \frac{\binom{1-b}{2} q^{2n}}{(1-q^n)^2} \right) \\ = \prod_{n=1}^\infty \frac{1}{(1-q^n)^2} \left( 1-(b+1) q^n + \left(\frac{b^2+b}{2}\right) q^{2n}\right).
\end{multline*}

Expanding $\prod_{n=1}^\infty \frac{1}{(1-q^n)^2} = \sum_{n=0}^\infty (n+1)q^n$ and gathering terms by partitions, we obtain

\[ C_{1-b,2}(q) = \sum_{n=0}^\infty q^n \left( \sum_{\lambda = 1^{\lambda_1} 2^{\lambda_2} \dots \vdash n} (1-b)^{\ell_0(\lambda)} \prod_{\lambda_j \geq 2} (1-\frac{b}{2}(\lambda_j-1)) \right). \]

We can now claim

\begin{theorem} Let $\ell_0(\lambda)$, $\lambda_j$ be defined as in $C_{1-b,2}$ above.  Then $$\sum_{\lambda \vdash n} \prod_{{h_{ij} \in \lambda} \atop {h_{ij} = 1,2}} \left(1-\frac{b}{{h_{ij}}^2}\right) \quad \text{ and } \quad \sum_{\lambda \vdash n} (1-b)^{\ell_0(\lambda)} \prod_{\lambda_j \geq 2} \left(1-\frac{b}{2}\right)$$

\noindent have the same constant and linear term in $b$.
\end{theorem}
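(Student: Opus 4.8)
The plan is to compare the two sums coefficient-by-coefficient in $b$ by reducing both to statements about partitions of $n$. The left-hand side is the Nekrasov-Okounkov product $\prod_{n\geq 1}(1-q^n)^{b-1}$ with every factor $(1-b/h^2)$ for $h\geq 3$ replaced by $1$, so its generating function is the product over $n\geq 1$ of $(1-b/1)$ raised to the number of hooks of length $1$ times $(1-b/4)$ raised to the number of hooks of length $2$. The right-hand side, as computed just above the statement, is the coefficient extracted from $C_{1-b,2}(q)$, namely $\sum_{\lambda\vdash n}(1-b)^{\ell_0(\lambda)}\prod_{\lambda_j\geq 2}(1-\tfrac b2(\lambda_j-1))$; note the statement as typeset drops the $(\lambda_j-1)$ factor, so I would first restore it (or work from the displayed formula rather than the theorem's abbreviated version) and then expand in powers of $b$.

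First I would extract the constant term: setting $b=0$ in both sums gives $\sum_{\lambda\vdash n}1=p(n)$ on each side, so the constant terms agree trivially. The substance is the linear term. On the right, differentiating $(1-b)^{\ell_0(\lambda)}\prod_{\lambda_j\geq 2}(1-\tfrac b2(\lambda_j-1))$ at $b=0$ gives $-\ell_0(\lambda)-\tfrac12\sum_{\lambda_j\geq 2}(\lambda_j-1)$. Summing over $\lambda\vdash n$, the coefficient of $b$ is $-\sum_{\lambda\vdash n}\ell_0(\lambda)-\tfrac12\sum_{\lambda\vdash n}\sum_{\lambda_j\geq 2}(\lambda_j-1)$. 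On the left, differentiating $\prod_{h\in\lambda, h=1}(1-b)\cdot\prod_{h\in\lambda,h=2}(1-b/4)$ at $b=0$ gives $-(\text{\# hooks of length }1\text{ in }\lambda)-\tfrac14(\text{\# hooks of length }2\text{ in }\lambda)$, so the coefficient of $b$ on the left is $-\sum_{\lambda\vdash n}h_1(\lambda)-\tfrac14\sum_{\lambda\vdash n}h_2(\lambda)$, where $h_1,h_2$ count hooks of those lengths.

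So the theorem reduces to the two combinatorial identities $\sum_{\lambda\vdash n}h_1(\lambda)=\sum_{\lambda\vdash n}\ell_0(\lambda)$ and $\tfrac14\sum_{\lambda\vdash n}h_2(\lambda)=\tfrac12\sum_{\lambda\vdash n}\sum_{\lambda_j\geq 2}(\lambda_j-1)$, i.e. $\sum_\lambda h_2(\lambda)=2\sum_\lambda\sum_{\lambda_j\geq 2}(\lambda_j-1)$. The first is standard: a cell with hook length $1$ is an outer corner of the Ferrers diagram, and the number of outer corners equals the number of distinct part sizes $\ell_0(\lambda)$; summing over $\lambda\vdash n$ gives the claim directly. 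For the second, a cell has hook length $2$ iff it has exactly one cell in its arm-plus-leg; such cells come in two flavors (the cell above an outer corner, or the cell to the left of an outer corner when there is a ``step'' of size $1$), and one shows the total count over all $\lambda\vdash n$ equals $2\sum_{\lambda\vdash n}\sum_{i}(\text{something})$ — the cleanest route is a generating-function check, since $\sum_n q^n\sum_{\lambda\vdash n}h_2(\lambda)$ and $\sum_n q^n\sum_{\lambda\vdash n}\sum_{\lambda_j\geq 2}(\lambda_j-1)$ are both computable in closed form (the latter is $\frac{1}{(q)_\infty}\sum_{m\geq 2}\frac{q^{2m}}{1-q^m}$ after writing $\sum_{\lambda_j\geq 2}(\lambda_j-1)$ as a count of cells, and the former follows from known hook-length statistics, e.g. the fact that the number of cells of hook length $t$ summed over $\lambda\vdash n$ has generating function $\frac{1}{(q)_\infty}\frac{tq^t}{1-q^t}\cdot$(correction terms) — I'd verify the precise form). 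The main obstacle I anticipate is exactly this last identity: getting the generating function for $\sum_{\lambda\vdash n}h_2(\lambda)$ right, since hook-length-$2$ cells are slightly more delicate than hook-length-$1$ cells, and making sure the factor of $2$ comes out correctly rather than chasing a bijection cell-by-cell. If the generating-function approach gets messy I would fall back on the observation that $\sum_{\lambda\vdash n}h_t(\lambda)$ equals the number of pairs $(\lambda,h)$ with $h$ a hook of length $t$, which by the standard ``remove the hook'' correspondence is related to $\sum_{\mu\vdash n-t}(\text{number of ways to add a $t$-hook to }\mu)$, and for $t=2$ adding a $2$-hook to $\mu$ amounts to adding either a vertical or horizontal domino in an addable position — the count of which, summed appropriately, should match $2\sum_{\lambda_j\geq 2}(\lambda_j-1)$ after reindexing.
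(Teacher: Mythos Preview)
Your initial move---``restoring'' the factor $(\lambda_j-1)$---is a misreading that breaks the proof. The factor $(1-\tfrac{b}{2})$ in the second sum is \emph{not} an abbreviation of $(1-\tfrac{b}{2}(\lambda_j-1))$; as the sentence following the theorem says explicitly, the expression is $C_{1-b,2}$ \emph{with each frequency $\lambda_j\geq 2$ reduced to $2$}, so the product is $(1-\tfrac{b}{2})^{\#\{j:\lambda_j\geq 2\}}$. The un-capped statement you set out to prove is in fact false already at $n=3$: the hook side gives $2(1-b)(1-\tfrac{b}{4})+(1-b)^2$ with linear term $-\tfrac{9}{2}b$, while your un-capped right side gives $(1-b)+2(1-b)^2$ (since $(1,1,1)$ contributes $(1-b)(1-b)$) with linear term $-5b$. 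Equivalently, your claimed identity $\sum_{\lambda\vdash n} h_2(\lambda)=2\sum_{\lambda\vdash n}\sum_{\lambda_j\geq 2}(\lambda_j-1)$ fails at $n=3$, where the left side is $2$ and the right side is $4$.

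With the intended capped version your differentiation-at-$b=0$ strategy is fine and essentially \emph{is} the paper's argument. The required identity becomes
\[
\sum_{\lambda\vdash n} h_2(\lambda)\;=\;2\sum_{\lambda\vdash n}\#\{j:\lambda_j\geq 2\},
\]
and this follows immediately from conjugation: every $2$-hook is either vertical (arm $0$, leg $1$) or horizontal (arm $1$, leg $0$); the vertical $2$-hooks of $\lambda$ are in bijection with the part sizes of $\lambda$ having frequency $\geq 2$; and conjugation swaps the two types of $2$-hook while permuting the partitions of $n$. The paper carries out exactly this conjugate pairing, simply packaged as a direct comparison of the two products for each pair $(\lambda,\lambda')$ rather than as a global sum identity after differentiation. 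So once you correct the statement, your route and the paper's coincide, and there is no need for the generating-function or hook-removal detours you outline.
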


That is, the hook length formula considering only hooks of size 1 or 2 can be matched by $C_{1-b,2}$ up to the linear term in $b$ if we reduce any $\lambda_j > 2$ to just 2.

\begin{proof}

\begin{figure}
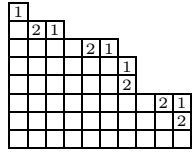

$\tiny\young(1:::::::::,\hfil 21:::::::,\hfil\hfil\hfil\hfil21::::,\hfil\hfil\hfil\hfil\hfil\hfil1:::,\hfil\hfil\hfil\hfil\hfil\hfil2:::,\hfil\hfil\hfil\hfil\hfil\hfil\hfil\hfil21,\hfil\hfil\hfil\hfil\hfil\hfil\hfil\hfil\hfil2,\hfil\hfil\hfil\hfil\hfil\hfil\hfil\hfil\hfil\hfil)
$
\caption{A partition with hooks of size 1 and 2 marked.}
\end{figure}

Consider a partition in which we observe the hooks of size 1 or 2.  Hooks of either of these sizes can only appear in the outermost squares of a Ferrers diagram.  Say there are $\ell_0(\lambda)$ hooks of size 1, $a_1$ hooks of size 2 which are on the top row of a part size, and $a_2$ hooks of size 2 which are in a part below a corner.  In the figure above, $a_1=3$ and $a_2=2$.  Note that the partition in the figure is not self-conjugate.  In a self-conjugate partition, $a_1 = a_2$.

Suppose $\lambda$ is not self-conjugate.  Then the contribution of $\lambda$ plus that of $\lambda^\prime$ to the terms of the two expressions of the theorem is, for the hook length formula restricted to hooks of size 1 and 2,

\begin{multline*}2 \cdot (1-b)^{\ell_0(\lambda)} \left(1-\frac{b}{4}\right)^{a_1 + a_2} = (1-b)^{\ell_0(\lambda)} \left( 2 - 2 \cdot \binom{a_1 + a_2}{1} \frac{b}{4} +O(b^2) \right) \\
= (1-b)^{\ell_0(\lambda)} \left( 2 - (a_1 + a_2)\frac{b}{2} +O(b^2) \right)
\end{multline*}

and, for $C_{1-b,2}$ with $\lambda_j$ restricted to 2,

$$(1-b)^{\ell_0(\lambda)} \left( \left(1-\frac{b}{2}\right)^{a_1} + \left(1-\frac{b}{2}\right)^{a_2} \right) = (1-b)^{\ell_0(\lambda)} \left( 2- (a_1 + a_2) \frac{b}{2} + O(b^2) \right).$$

For a self-conjugate partition, we have for the hook length formula,

\begin{multline*}(1-b)^{\ell_0(\lambda)} \left(1-\frac{b}{4}\right)^{a_1 + a_2} = (1-b)^{\ell_0(\lambda)} \left( 1 - \binom{2a_1}{1} \frac{b}{4} +O(b^2) \right) \\
= (1-b)^{\ell_0(\lambda)} \left( 1 - a_1 \frac{b}{2} +O(b^2) \right)
\end{multline*}

and, for $C_{1-b,2}$,

$$(1-b)^{\ell_0(\lambda)} \left(1-\frac{b}{2}\right)^{a_1} = (1-b)^{\ell_0(\lambda)} \left( 1- a_1 \frac{b}{2} + O(b^2) \right).$$

Thus the two expressions are equal up to the linear term.

\end{proof}

Numerical calculation suggests that this also holds for the 3-truncations and 4-truncations: i.e. the constant and linear coefficients appear to match in

$$\sum_{\lambda \vdash n} \prod_{{h_{ij} \in \lambda} \atop {h_{ij} = 1,2,3}} \left(1-\frac{b}{h_{ij}^2}\right) \quad \text{ and } \quad \sum_{\lambda \vdash n} (1-b)^{\ell_0(\lambda)} \prod_{\lambda_j \geq 1} \left( 1 - \frac{b}{2}(\lambda_j-1) +\frac{b^2+b}{6}\binom{\lambda_j-1}{2} \right) $$

\noindent in which if $\lambda_j > 3$, we declare $\lambda_j = 3$, and the analogous expression for $j=4$.  (In expanding the third term we used the identity $\prod_{n=1}^\infty \frac{1}{(1-q^n)^3} = \sum_{n=0}^\infty \binom{n+2}{2} q^n$.)

The general conjecture to which we are led is:

\begin{conjecture} For any $n$ and fixed $m$, the linear terms are equal in the truncated hook length expression $$\sum_{\lambda \vdash n} \prod_{{h_{ij} \in \lambda} \atop {h_{ij} \leq m}} \left(1-\frac{b}{h_{ij}^2}\right) $$ and the restricted $C_{1-b,j}$ sum \begin{multline*} C_{1-b,m}^\prime (q) = \sum_{n=0}^\infty q^n \sum_{\lambda \vdash n} \prod_{\lambda_j \geq 1} \left( \sum_{i=0}^{\infty} \binom{1-b}{i+1} \binom{min(\lambda_j,m)-1}{i} \right) \\ = \sum_{n=0}^\infty q^n \sum_{\lambda \vdash n} \prod_{\lambda_j \geq 1} \left(1-\frac{b}{1} \right) \cdots \left( 1-\frac{b}{min(\lambda_j,m)}\right).\end{multline*}

\end{conjecture}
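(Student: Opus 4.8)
The plan is to reduce the conjectured equality of linear coefficients to a single, classical-looking identity for hook lengths summed over all partitions of $n$, and then to establish that identity. First I would observe that setting $b=0$ makes every factor equal to $1$, so both expressions have constant term $p(n)$, the number of partitions of $n$, and the content is entirely in the coefficient of $b^{1}$. Expanding to first order: on the hook side, $\prod_{h_{ij}\in\lambda,\ h_{ij}\le m}\bigl(1-b/h_{ij}^{2}\bigr)=1-b\sum_{t=1}^{m}H_{t}(\lambda)/t^{2}+O(b^{2})$, where $H_{t}(\lambda)$ is the number of cells of $\lambda$ of hook length exactly $t$, so the linear coefficient of the truncated hook sum is $-\sum_{t=1}^{m}t^{-2}\sum_{\lambda\vdash n}H_{t}(\lambda)$. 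On the $C_{1-b,m}^{\prime}$ side, the factor attached to a part size of multiplicity $e$ is $\prod_{s=1}^{\min(e,m)}(1-b/s)$, with linear part $-b\sum_{s=1}^{\min(e,m)}1/s$, so after summing over the part sizes of $\lambda$ and interchanging the order of summation the linear coefficient of the $q^{n}$ term of $C_{1-b,m}^{\prime}(q)$ is $-\sum_{t=1}^{m}t^{-1}\sum_{\lambda\vdash n}M_{t}(\lambda)$, where $M_{t}(\lambda)$ is the number of distinct part sizes of $\lambda$ occurring with multiplicity at least $t$. Since $m$ is arbitrary, the conjecture then follows from the single identity
\[
\sum_{\lambda\vdash n}H_{t}(\lambda)\ =\ t\sum_{\lambda\vdash n}M_{t}(\lambda)\qquad(t\ge1,\ n\ge0).
\]

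Next I would dispatch the right-hand side, which is elementary: removing $t$ copies of the distinguished part size is a bijection giving $\sum_{\lambda\vdash n}M_{t}(\lambda)=\sum_{v\ge1}p(n-tv)$ (a quantity also equal to the total multiplicity of $t$ over all partitions of $n$). So the task becomes $\sum_{\lambda\vdash n}H_{t}(\lambda)=t\sum_{v\ge1}p(n-tv)$, equivalently the generating-function identity
\[
\sum_{n\ge0}q^{n}\sum_{\lambda\vdash n}H_{t}(\lambda)\ =\ \frac{t\,q^{t}}{1-q^{t}}\cdot\frac{1}{(q)_{\infty}}.
\]
This is a classical fact about hook lengths, but for self-containment I would prove it through the $t$-core/$t$-quotient (Littlewood) decomposition: on the $t$-runner abacus a cell of $\lambda$ of hook length exactly $t$ corresponds to a bead whose immediately preceding slot on the same runner is vacant, whence $H_{t}(\lambda)=\sum_{r=0}^{t-1}\ell_{0}(\lambda^{(r)})$, where $(\lambda^{(0)},\dots,\lambda^{(t-1)})$ is the $t$-quotient and $\ell_{0}(\mu)$ counts the distinct part sizes of $\mu$ (equivalently, the hook-length-$1$ cells of $\mu$). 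By symmetry among the runners this sum is $t\sum_{\lambda\vdash n}\ell_{0}(\lambda^{(0)})$; combining $|\lambda|=|\kappa|+t\sum_{r}|\lambda^{(r)}|$ ($\kappa$ the $t$-core) with the elementary evaluation $\sum_{\mu}\ell_{0}(\mu)x^{|\mu|}=\frac{x}{1-x}\cdot\frac{1}{(x;x)_{\infty}}$ (again by removing one copy of a distinguished part size) factors the generating function of $\sum_{\lambda\vdash n}\ell_{0}(\lambda^{(0)})$ as $\bigl(\sum_{\kappa\text{ a }t\text{-core}}q^{|\kappa|}\bigr)\cdot\frac{q^{t}}{1-q^{t}}\cdot\frac{1}{(q^{t};q^{t})_{\infty}^{t}}$, and inserting the classical product $\sum_{\kappa}q^{|\kappa|}=(q^{t};q^{t})_{\infty}^{t}/(q)_{\infty}$ collapses it to $\frac{q^{t}}{1-q^{t}}\cdot\frac{1}{(q)_{\infty}}$, which gives the identity after restoring the factor $t$.

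The hard part is the identity $\sum_{\lambda\vdash n}H_{t}(\lambda)=t\sum_{\lambda\vdash n}M_{t}(\lambda)$ itself: the abacus argument proves it but leans on outside machinery, and the outcome more in the combinatorial spirit of the rest of the paper would be a direct bijection sending a pair $(\lambda,c)$, with $\lambda\vdash n$ and $c$ a cell of hook length $t$, to a triple $(\mu,v,i)$ with $v\ge1$, $\mu\vdash n-tv$, and $i\in\{0,1,\dots,t-1\}$, obtained by stripping off the $t$-rim hook hooked on $c$ and reading $v$ and $i$ from its shape and its position among the rows of $\lambda$. I would expect such a bijection to exist, and constructing it cleanly is the one genuinely nontrivial step; everything else is bookkeeping. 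I would close by remarking that this same reduction turns the numerically observed agreement of the higher coefficients of $b$ into identities among the statistics $\sum_{\lambda\vdash n}\binom{H_{t}(\lambda)}{2}$ and the mixed sums $\sum_{\lambda\vdash n}H_{s}(\lambda)H_{t}(\lambda)$, which one expects to fail in general — explaining why only the linear term is conjectured.
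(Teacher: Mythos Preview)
The paper does not prove this statement at all: it is stated as an open conjecture, supported only by the preceding theorem for $m=2$ (proved by pairing each $\lambda$ with its conjugate and comparing the contributions of hooks of size $2$ on the two sides) and by numerical evidence for $m=3,4$. Your argument, by contrast, actually settles the conjecture for all $m$.

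Your reduction is clean and correct. Expanding both sides to first order in $b$ gives, on the hook side, $-\sum_{t=1}^{m}t^{-2}\sum_{\lambda\vdash n}H_{t}(\lambda)$, and on the $C^{\prime}_{1-b,m}$ side, $-\sum_{t=1}^{m}t^{-1}\sum_{\lambda\vdash n}M_{t}(\lambda)$, so the conjecture for every $m$ is equivalent to the term-by-term identity $\sum_{\lambda\vdash n}H_{t}(\lambda)=t\sum_{\lambda\vdash n}M_{t}(\lambda)$. The right side is $t\sum_{v\ge 1}p(n-tv)$ by the ``remove $t$ copies'' bijection, and your $t$-core/$t$-quotient computation of the left side is valid: on the $t$-runner abacus a hook of length $t$ is a bead with a gap immediately above it on its own runner, hence a hook of length $1$ in the corresponding quotient partition, so $H_{t}(\lambda)=\sum_{r}\ell_{0}(\lambda^{(r)})$; the generating-function factorization then collapses to $\dfrac{t\,q^{t}}{1-q^{t}}\cdot\dfrac{1}{(q)_{\infty}}$ exactly as you write. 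This identity is in fact known (it appears, for instance, in work of Bacher--Manivel and of Bessenrodt on hook statistics), so you could also simply cite it; either way your proof is complete, and the ``hard part'' you flag is not actually a gap, since the abacus argument you give already suffices. The conjugation-pairing proof in the paper for $m=2$ is more elementary but does not generalize, because for $m\ge 3$ the quantities $a_{1},a_{2}$ (numbers of size-$2$ hooks on horizontal versus vertical edges) have no analogue that averages correctly under $\lambda\leftrightarrow\lambda'$ alone; your reduction to a single-$t$ identity is what makes the general case tractable.
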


It is entirely possible that this is not the best relationship to be found.  Still, the coefficients of each $q^n$ in $C_{1-b,j}$ necessarily approach the hook length formula as $j$ grows.  It would certainly be interesting to formalize a relationship between suitable restrictions of the hook length formula and $(1-b,j)$-colored partitions which makes it clear that the latter approach the former and eventually stabilize there.  One fruitful line of investigation might be to determine what correction term is needed by the unmodified truncation to match the hook length formula.

In attempting to find such a correction factor for $C_{1-b,2}$, we made the following numerical observation: it appears to be the case that 

\begin{conjecture} For any $n$, the constant, $b$, and $b^2$ coefficients match in $$\sum_{\lambda \vdash n} \prod_{{h_{ij} \in \lambda} \atop {h_{ij} = 1,2}} \left( 1-\frac{b}{h_{ij}^2} \right)$$ and $$\sum_{\lambda \vdash n} \left[ \lambda_4 \frac{b^2 - b^3}{16} + (1-b)^{\ell_0(\lambda)} \prod_{\lambda_j > 1} \left(1-\frac{b}{2} \right) \right].$$
\end{conjecture}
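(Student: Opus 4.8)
The plan is to expand the two sums as polynomials in $b$ and match the coefficients of $b^0$, $b^1$, $b^2$ after summing over $\lambda \vdash n$. The correction term $\lambda_4(b^2-b^3)/16$ has no constant or linear part, so the $b^0$ and $b^1$ statements are exactly the content of the preceding theorem (the size-$\{1,2\}$ hook truncation and the restricted $C_{1-b,2}$ sum agree through the linear term) and may be quoted. Everything new is in the $b^2$ coefficient.

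Fix $\lambda \vdash n$ and record: $c_1 = \ell_0(\lambda)$, the number of distinct part sizes (equivalently, of hooks of length $1$); $d = d(\lambda)$, the number of part sizes of multiplicity $\geq 2$; and $c_2 = c_2(\lambda)$, the number of hooks of length $2$. Every hook of length $2$ is one of two kinds: a vertical domino occupying the last two rows of a repeated part value --- there are exactly $d$ of these --- or a horizontal domino at the outer end of a row whose next row is shorter by at least $2$; writing $A = A(\lambda)$ for the number of the latter, $c_2 = A + d$. A short conjugation argument gives $A(\lambda) = d(\lambda')$: a gap of size $\geq 2$ between consecutive distinct part values of $\lambda$ is precisely a part value of $\lambda'$ of multiplicity $\geq 2$ (equivalently, $A$ is the number of maximal runs of consecutive part values, less $1$ if $1$ is a part). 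Expanding the summands, the coefficient of $b^2$ on the left is $\binom{c_1}{2} + \frac{1}{4}c_1c_2 + \frac{1}{16}\binom{c_2}{2}$, while on the right, correction included, it is $\binom{c_1}{2} + \frac{1}{2}c_1 d + \frac{1}{4}\binom{d}{2} + \frac{1}{16}\lambda_4$.

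Cancel $\binom{c_1}{2}$, multiply by $16$, substitute $c_2 = A + d$, and average the right-hand sum over the conjugation involution $\lambda \leftrightarrow \lambda'$; conjugation fixes $c_1$ and $c_2$ and interchanges $A$ and $d$, while the left-hand summand --- a symmetric function of the hook multiset --- is untouched. Using this together with the known linear identity $\sum_{\lambda\vdash n} c_2 = 2\sum_{\lambda\vdash n} d$ (itself immediate from $A(\lambda)=d(\lambda')$ and the involutivity of conjugation), the entire $b^2$ statement collapses to the single partition identity
\[ \sum_{\lambda \vdash n} \Big( c_2(\lambda) - \big(A(\lambda) - d(\lambda)\big)^2 \Big) = 2 \sum_{\lambda \vdash n} \lambda_4 , \]
equivalently $\sum_{\lambda}\big((A+d) - (A-d)^2\big) = 2\sum_{\lambda} m_4(\lambda)$, where $m_4(\lambda) = \lambda_4$ is the multiplicity of the part $4$. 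This pins down the role of the otherwise mysterious correction $\lambda_4(b^2-b^3)/16$: it is exactly the term supplying the right-hand $2\sum_\lambda \lambda_4$, and its cubic part is inert for the present claim. (I would of course first confirm this reduced identity numerically, say for $n \leq 20$, as the conjecture's phrasing suggests has been done.)

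The reduced identity is the main obstacle, and the difficulty is structural: the statistic $A(\lambda) = d(\lambda')$ depends on which consecutive values occur in $\lambda$, so it is not local in the parts, and the right-hand summand of the conjecture is not conjugation-invariant --- hence neither a naive product generating function nor a term-by-term conjugation pairing closes the gap. I see two plausible routes. One is generating functions: decompose the support of $\lambda$ into maximal runs of consecutive integers and build $\sum_{\lambda} s^{A(\lambda)} t^{d(\lambda)} q^{|\lambda|}$ from per-run factors $\prod_{v \text{ in the run}}\big(q^v + t\,q^{2v}/(1-q^v)\big)$, carrying an extra $s$ for each run whose minimum exceeds $1$, assembled with gaps, then differentiate in $s,t$, set $s=t=1$, and compare with $\sum_\lambda \lambda_4 q^{|\lambda|} = q^4/\big((1-q^4)(q)_\infty\big)$; the bookkeeping for $(A-d)^2$, which mixes the run count with the repeated-value count, is where the work concentrates. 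The other is a sign-reversing involution on partitions of $n$, refining conjugation, engineered so that all of $c_2 - (A-d)^2$ cancels except a surplus of $2$ for each occurrence of the part $4$; finding the right matching rule is the crux. Failing an elementary argument, the identity is a linear relation among coefficients of explicit $q$-series and is amenable to the modular-forms/Sturm-bound technique already used in the paper for $\nu_2(16n+14)$, once the relevant series are recognised as quasimodular.
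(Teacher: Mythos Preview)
The statement you are addressing is labelled a \emph{Conjecture} in the paper, and the paper offers no proof --- only the remark that it was observed numerically.  So there is no argument of the paper's to compare your attempt against; the relevant question is simply whether your proposal actually proves the conjecture.

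It does not, and you say so yourself.  Your handling of the $b^0$ and $b^1$ coefficients is correct: the correction term contributes nothing there, and the preceding theorem gives exactly that agreement.  Your $b^2$ analysis is also sound: with $c_1=\ell_0(\lambda)$, $d$ the number of repeated part values, and $c_2=A+d$ the number of hooks of length~$2$, the claim $A(\lambda)=d(\lambda')$ is right, your expansions of both $b^2$ coefficients are right, and the symmetrization over conjugation legitimately reduces the whole $b^2$ claim to
\[
\sum_{\lambda\vdash n}\Bigl(c_2(\lambda)-(A(\lambda)-d(\lambda))^2\Bigr)\;=\;2\sum_{\lambda\vdash n}\lambda_4.
\]
This is a clean and genuinely clarifying reformulation.

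But that is where the proof stops.  You describe two plausible attacks --- a run-decomposed bivariate generating function for $\sum_\lambda s^{A}t^{d}q^{|\lambda|}$, and a sign-reversing involution refining conjugation --- without carrying either to completion, and you note a Sturm-bound fallback without identifying the relevant forms.  The cross term $(A-d)^2$ mixes the run count with the repeated-value count, so neither route is routine, and no amount of numerical confirmation substitutes for an argument.  As written, your proposal establishes that the conjecture is \emph{equivalent} to the displayed partition identity; it does not establish the conjecture.  If you can prove that identity by either of your two outlined methods, you will have more than the paper does --- but until then the gap is precisely the one you name.
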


Here $\lambda_4$ is the number of 4s in the partition $\lambda$, but $\sum_{\lambda \vdash n} \lambda_4$ is also known to be the number of frequencies in $\lambda$ that are at least 4, i.e. $\sum_{\lambda \vdash n} \sum_{\lambda_i \geq 4} 1$, which may be more relevant to a proof.

Of course, the quadratic coefficients would still match if we subtracted only $\lambda_4 \frac{b^2}{16}$, but note that $b^2-b^3$ divides the difference between the coefficients of the two polynomials, since $(1-b)$ divides both and the difference appears at the $b^2$ term.  Hence the best compensation factor is almost certainly a multiple of this.


\begin{thebibliography}{99}

\bibitem{GEA1} G. Andrews, Stacked Lattice Boxes, Ann. Comb. 3 (1999) 115-130.

\bibitem{GEA2} G. Andrews, Singular Overpartitions.  preprint: http://www.personal.psu.edu/gea1/pdf/303.pdf

\bibitem{BrLoIRMN} K. Bringmann, J. Lovejoy, Dyson's Rank, Overpartitions, and Weak Maass Forms, Int. Math. Res. Not. 2007.  doi:10.1093/imrn/rnm063 

\bibitem{CSWZ} W. Y. C. Chen, L. H. Sun, R.-H. Wang, L. Zhang, Ramanujan-type Congruences for Overpartitions Modulo 5, preprint.  arXiv:1406.3801

\bibitem{CHSZ} W. Y. C. Chen, Q.-H. Hou, L. H. Sun, L. Zhang, Ramanujan-type Congruences for Overpartitions Modulo 16, preprint. arXiv:1408.1597

\bibitem{CoLo} S. Corteel and J. Lovejoy, Overpartitions, Trans. Amer. Math. Soc. 356 (2004) 1623-1635.

\bibitem{Dilcher} K. Dilcher, Some $q$-series identities related to divisor sums, Disc. Math. 145 (1995) 83-93.

\bibitem{Fine} N. J. Fine, Basic Hypergeometric Series and Applications.  Mathematical Surveys and Monographs 27, AMS.  Providence, RI 1988.

\bibitem{GH} B. Gordon, K. Hughes, Multiplicative properties of $\eta$ products II. Cont. Math. 143 (1993), 415-430

\bibitem{HirschSell} M. D. Hirschhorn and J. A. Sellers, An infinite family of overpartition congruences modulo 12, INTEGERS 5 (2005), \#A20.

\bibitem{HirschSell2} M. D. Hirschhorn and J. A. Sellers, Arithmetic properties of Andrews' singular overpartitions.  preprint, arXiv:1405.3626

\bibitem{Kim} B. Kim, A short note on the overpartition function.  Disc. Math. 309 (2009) 2528-2532.  doi:10.1016/j.disc.2008.05.007 .

\bibitem{New} M. Newman, Construction and application of a certain class of modular functions II. Proc. London Math. Soc. (3) 9 (1959), 353-387

\bibitem{OEIS1} On-line Encyclopedia of Integer Sequences.  http://oeis.org/A008951 .

\bibitem{MOJRouse} J. Rouse, MathOverflow.net answer.  http://mathoverflow.net/questions/177477/a-divisor-sum-congruence-for-8n6/

\bibitem{BT} N. B. Tani and S. Bouroubi, Enumeration of the Partitions of an Integer into Parts of a Specified Number of Difference Sizes and Especially Two Sizes, Jour. Integer Seq. 14 (2011), Art. 11.3.6.  URL: http://www.emis.ams.org/journals/JIS/VOL14/Tani/tani7.pdf

\bibitem{XiaYao} E.X.W. Xia and O.X.M. Yao, New Ramanujan-like congruences modulo powers of 2 and 3 for overpartitions, J. Num. Theory 133 (2013) 1932-1949.

\bibitem{XY} E. X. W. Xia, O. X. M. Yao, Analogues of RamanujanÕs partition identities. Ramanujan J (2013) 31:373396. doi:10.1007/s11139-012-9439-x

\end{thebibliography}
\end{document}